\def\div{{\rm div }}
\def\R{\mathbb{R}}
\def\T{\mathbb{T}}
\def\E{\mathbb{E}}                    
\newtheorem{theorem}{Theorem}
\newtheorem{proposition}{Proposition}
\newtheorem{definition}{Definition}
\title{Enhanced sampling of multidimensional
       free-energy landscapes using adaptive 
       biasing forces}
\author{Christophe Chipot\footnote{On leave from Équipe de dynamique des 
                                   assemblages membranaires, UMR 7565, Nancy 
                                   Université, BP 239, 54506 
                                   Vand\oe uvre-lès-Nancy cedex, France.
                                   chipot@ks.uiuc.edu} 
        and 
        Tony Lelièvre\footnote{École des Ponts ParisTech, 6 et 8 avenue Blaise Pascal, 
                               77455 Marne-la-Vallée, France.    
                               lelievre@cermics.enpc.fr}        
        \\[0.2cm]        
 $^\ast$\emph{Theoretical and Computational Biophysics Group, }             \\
        \emph{Beckman Institute for Advanced Science and Engineering, }     \\
        \emph{University of Illinois, Urbana-Champaign. }                   \\ 
 $^\dag$\emph{Université Paris-Est, Cermics. }}
\begin{document}

\selectlanguage{english}

\maketitle

\abstract{We propose an adaptive biasing algorithm aimed at enhancing the 
sampling of multimodal measures by Langevin dynamics. The underlying idea 
consists in generalizing the standard adaptive biasing force method commonly used 
in conjunction with molecular dynamics to handle in a more effective fashion 
multidimensional reaction coordinates. The proposed approach is anticipated to be 
particularly useful for reaction coordinates, the components of which are weakly 
coupled, as illuminated in a mathematical analysis of the long-time convergence 
of the algorithm. The strength as well as the intrinsic limitation of the method 
are discussed and illustrated in two realistic test cases.}



\medskip

\section{Introduction}

Sampling of multimodal measures is a central problem in many 
scientific areas, such as statistical 
simulations, in particular molecular dynamics, which constitutes 
the primary focus of the present work. One standard approach to deal with such a 
situation consists in resorting to biasing techniques --- e.g.
importance sampling methods, in order to reduce the multimodal nature
of the targeted measure. Under these premises, the main difficulty is 
evidently to devise the correct bias.

One class of methods proposed in the framework of molecular 
dynamics and which has proven to be useful also for a variety of 
applications~\cite{chopin-lelievre-stoltz-10} are adaptive 
biasing numerical schemes. The underlying idea
here consists in designing adaptively the bias such that 
the new targeted measure be uniform 
along {\em a priori} chosen directions. Only these directions have to be chosen, 
but not the precise analytical expression of the bias. Examples of such 
class of approaches include the Wang-Landau algorithm~\cite{wang-landau-01}, 
non-equilibrium metadynamics~\cite{laio-parinello-02,bussi-laio-parinello-06} 
and the adaptive biasing 
force (ABF) method~\cite{darve-pohorille-01,henin-chipot-04}, which will 
constitute the main thrust of this contribution. 
The reader is referred to~\cite[Chapter 5]{lelievre-rousset-stoltz-10} for a 
general, mathematically-oriented presentation of adaptive 
methods.

Let us introduce the ABF method. In what 
follows, the algorithms will be presented in the framework of 
sampling of configurational space and overdamped Langevin 
dynamics, but generalization to sampling of phase space and standard Langevin 
dynamics is straightforward, as can be seen 
in Section~\ref{sec:num}. The canonical, Boltzmann-Gibbs, measure 
will be considered here:
\begin{equation}\label{eq:mu}
d\mu(q)=Z^{-1}\exp(-\beta V(q)) \, dq,
\end{equation}
where $\beta$ is proportional to the inverse temperature, $q  \in  {\mathcal D}$,  
$V:{\mathcal D}  \to \R$ is the so-called potential energy 
function, which is assumed to be a smooth function in the following,  $Z=\int_
{{\mathcal D}} \exp(-\beta  V(q))  \,  dq$ and ${\mathcal D}=\{ q,\, V(q) < 
\infty \}$ is the configurational space. The overdamped Langevin 
dynamics writes:
\begin{equation}
\label{eq:Q}
dQ_t=- \nabla V(Q_t) \, dt + \sqrt{2 \beta^{-1}} dB_t,
\end{equation}
where $B_t$ is an $n$-dimensional standard Brownian motion. 
Under mild conditions on~$V$, this dynamics is ergodic with respect to the measure $\mu$, i.e. trajectory or time averages converge to canonical 
averages.

For a multimodal measure $\mu$, the sampling obtained with $Q_t$ is, 
however, rather poor. Indeed, the typical problem with dynamics~\eqref{eq:Q} 
is that the process $Q_t$ remains trapped for long times in some 
metastable states. The purpose of an adaptive 
biasing force is to enhance sampling by 
subtracting from $V$ a potential such that the aforementioned
metastable features are eliminated. This relies on an assumed {\em a priori} 
knowledge of those ``coordinates'' that remain trapped 
--- viz. ``slow variables'' of the dynamics, also called ``collective 
variables'' or ``reaction coordinates''. This method can thus be seen as an 
adaptive importance sampling method.

In the following, emphasis will be put on the case
where two slow variables have been identified, namely $\xi_1:{\mathcal D}  
\to \T$, and $\xi_2:{\mathcal D}  \to \T$, where $\T$ denotes the
one-dimensional torus. Throughout the present work, $\xi_1$ and 
$\xi_2$ will be assumed to be smooth functions such that $|\nabla 
\xi_1|\neq 0$ and $|\nabla \xi_2|\neq 0$.

For simplicity, let us consider for the moment simple reaction coordinates: 
\begin{equation}\label{eq:simp_case}
{\mathcal D}=\T^n, \, \xi_1(x)=x_1, \,\xi_2(x)=x_2, 
\end{equation}
where the components of $x$ are referred to as: $x=(x_1,x_2,x_{3 
\ldots n})$. The case of general $\xi_i$'s will be discussed below, in Sections~
\ref{sec:GABF} and~\ref{sec:num}. In this simple framework, one standard 
ABF--like method is~\cite{lelievre-rousset-stoltz-07-b}:
\begin{equation} \label{eq:ABF_standard}
\left\lbrace
\begin{aligned}
&dY_t= \Big( - \nabla V + \sum_{\alpha=1}^2 \Gamma^\alpha_t \circ (\xi_1,\xi_2) 
\, 
\nabla \xi_\alpha    \Big)(Y_t) \, dt + \sqrt{2 \beta^{-1}} dB_t,\\
&\text{for $\alpha=1,2$}, \ \Gamma^\alpha_t (x_1,x_2) = \E(\partial_{x_\alpha} V
(Y_t) \, |\, (\xi_1,\xi_2)(Y_t)=(x_1,x_2)).
\end{aligned}
\right.
\end{equation}
In practice, the dynamics is discretized in time, and the biasing forces, 
$\Gamma^\alpha_t$, are approximated by empirical or time averages 
in each cells of a grid of the values of $(\xi_1,\xi_2)$. 
The bottom line of the method is to observe that, in the long-time 
limit, $(\Gamma^1_t,\Gamma^2_t)$ converges to $\nabla A$, where $A$ is the 
so-called free energy associated to $V$ and $(\xi_1,\xi_2)$ --- 
see~\cite{lelievre-rousset-stoltz-08}. At equilibrium, the potential eventually 
becomes $V-A\circ(\xi_1,\xi_2)$, which is such that the 
associated Boltzmann-Gibbs measure, proportional to $\exp(-\beta(V-A\circ
(\xi_1,\xi_2)))$, has uniform marginal laws along $
(\xi_1,\xi_2)$. Indeed, the free energy is defined, up to an additive constant, 
as:
\begin{equation}\label{eq:FE}
A(x_1,x_2) = - \beta^{-1} \ln \left( \int \exp(-\beta V(x_1,x_2,x_{3 \ldots n})) 
d x_{3 \ldots n} \right).
\end{equation}
Using the definition~\eqref{eq:FE} of the free energy $A$, it is easy to check 
that the equilibrium probability density for~\eqref{eq:ABF_standard}, namely
$$\psi_\infty(x_1,x_2,x_{3 \ldots n}) \varpropto \exp \Big( - \beta ( V
(x_1,x_2,x_
{3 \ldots n}) - A(x_1,x_2) ) \Big)$$ is such that the marginal law 
of $\psi_\infty$ along $(\xi_1,\xi_2)$ is uniform: $$\int_{\T^{n-2}} \psi_
\infty(x_1,x_2,x_{3 \ldots n}) d x_{3 \ldots n} = 1_{\T^2}.$$ Another related 
important property of the stochastic differential equation~\eqref
{eq:ABF_standard} 
is that the dynamics along $(\xi_1,\xi_2)$ has a simple diffusive 
behavior, since, by a straightforward It\^o calculus, for any test function $
\varphi: \T^2 \to \R$,
\begin{equation}\label{eq:diff_2d}
\partial_t \E(\varphi( (\xi_1,\xi_2) Y_t)) = \beta^{-1} \, \E( \Delta \varphi 
( (\xi_1,\xi_2) Y_t) )
\end{equation}
which is a weak form for the heat equation on the marginal law
along $(\xi_1,\xi_2)$ of the density of $Y_t$. Roughly speaking, the energy 
landscape has been flattened in the $(\xi_1,\xi_2)$-direction.

The aim of this work is to explore a generalization of the ABF 
method --- originally devised to compute a free-energy difference, which is a 
quantity of paramount importance in statistical 
mechanics~\cite{chipot-pohorille-07,lelievre-rousset-stoltz-10} --- 
focusing primarily on 
its adaptive importance sampling feature and with the 
objective of obtaining a diffusion along some chosen directions. 
More specifically, we have in mind the case of $m$ reaction 
coordinates with $m \ge 4$, for which the standard ABF approach cannot be used, 
because it would require that the biasing forces, 
namely $m$ functions of $m$ variables,
be approximated by a Monte Carlo procedure 
which is admittedly computationally prohibitive as $m$ 
increases. Moreover, the fact that the biased dynamics along the reaction 
coordinates is a simple diffusion in the whole torus $\T^m$ seems 
somewhat inappropriate, given that exploration of such a space 
may become extraordinarily long for large $m$.

The approach proposed herein consists in considering the 
dynamics:
\begin{equation} \label{eq:GABF_simp}
\left\lbrace
\begin{aligned}
&dX_t= -  \nabla \Big(  V - \sum_{\alpha=1}^2 A^\alpha_t \circ \xi_\alpha  \Big)
(X_t) \, dt + \sqrt{2 \beta^{-1}} dB_t,\\
&\text{for $\alpha=1,2$}, \  \frac{d A^\alpha_t}{d x_\alpha} (x_\alpha) = \E
(\partial_{x_\alpha} V(X_t) \, |\, \xi_\alpha(X_t)=x_\alpha).
\end{aligned}
\right.
\end{equation}
The interest of this dynamics is that only two one-dimensional functions have to 
be approximated. It is, therefore, expected that the Monte Carlo 
approximation of the biasing functions will be faster. One can 
check that this dynamics retains some essential 
features of the ABF dynamics~\eqref{eq:ABF_standard}, namely the fact that it 
leads to a simple diffusive behavior {\em in each direction} $
\xi_{\alpha}$ (see Section~\ref{sec:diff} below): for any test function $\varphi: 
\T \to \R$, and for $\alpha \in \{1,2\}$, $$\partial_t \E(\varphi( \xi_\alpha
(X_t))) = \beta^{-1} \, \E( \varphi'' (\xi_\alpha( X_t) ) ).$$ As a consequence, 
the marginal laws along $\xi_1$ and along $\xi_2$ of the 
equilibrium measure of the dynamics are uniform laws over $\T$. 
It ought to be noted, however, that~\eqref{eq:diff_2d} does not 
hold in general in such a situation, and that the marginal law of 
the equilibrium measure along $(\xi_1,\xi_2)$ is {\em not} in general 
a uniform law over $\T^2$. This is presented in detail in 
Section~\ref{sec:diff}.

A motivation for considering dynamics~\eqref{eq:GABF_simp} is that in the 
decoupled case, where
\begin{equation}\label{eq:decoupled_V}
\begin{aligned}
&V(x_1,x_2,x_{3\ldots n})= V(x_1) + V(x_2,x_{3\ldots n}) \text{ or }\\
&V(x_1,x_2,x_{3\ldots n})= V(x_2) + V(x_1,x_{3\ldots n}),
\end{aligned}
\end{equation}
then~\eqref{eq:GABF_simp} is equivalent to~\eqref{eq:ABF_standard}. The 
key idea here is that, if the two reaction coordinates, $\xi_1$ and $
\xi_2$ ,are ``not too strongly coupled'', then~\eqref{eq:GABF_simp} should be as 
effective as~\eqref{eq:ABF_standard}, at a 
far reduced cost.

In Section~\ref{sec:CV}, we propose a mathematical analysis of the long-time convergence of~\eqref{eq:GABF_simp}, which quantifies the underlying 
decoupling assumption. Section~\ref{sec:GABF} is devoted to a discussion of some 
generalization of the idea of the present work, in 
particular to the case of $m$ non-linear reaction coordinates with $m>2$. 
Finally, in Section~\ref{sec:num}, we report two numerical illustrations 
on non-trivial test cases, which illuminate the interest and the 
limitation of the approach.

\section{A convergence result}\label{sec:CV}

Let us introduce the Fokker-Planck equation associated to~\eqref{eq:GABF_simp}. 
Let us further refer $\psi(t,x)$ to as the density of the 
distribution of $X_t$. This function satisfies the partial differential 
equation:
\begin{equation} \label{eq:FP}
\left\lbrace
\begin{aligned}
&\partial_t \psi= \div \left( \nabla V \psi + \beta^{-1} \psi \right) - \partial_
{x_1} ( (A^1_t)' (x_1) \psi ) - \partial_{x_2} ( (A^2_t)' (x_2) \psi ),\\
&(A^1_t)' (x_1) =  \frac{\displaystyle \int \partial_{x_1} V (x) \psi(t,x) \, 
dx_2 \, dx_{3 \ldots n}} {\displaystyle\int  \psi(t,x) \, dx_2 \, dx_{3 \ldots 
n}},\\
&(A^2_t)' (x_2) =  \frac{\displaystyle\int \partial_{x_2} V (x) \psi(t,x) \, dx_1 
\, dx_{3 \ldots n}} {\displaystyle\int \psi(t,x) \, dx_1 \, dx_{3 \ldots n}}.
\end{aligned}
\right.
\end{equation}
where $(A^\alpha_t)'$ corresponds in what follows to the 
derivative of the one-dimensional function $x_\alpha \mapsto A^\alpha_t (x_
\alpha)$.

\subsection{Diffusive behavior}\label{sec:diff}

Let us introduce the marginal laws along $x_1$ and $x_2$ of $\psi$:
\begin{equation}\label{eq:marginals}
\psi^{x_1}(t,x_1)=\int  \psi(t,x) \, dx_2 \, dx_{3 \ldots n} \text{ and }
\psi^{x_2}(t,x_2)=\int  \psi(t,x) \, dx_1 \, dx_{3 \ldots n}.
\end{equation}
These marginal laws exhibit a simple diffusive behavior:
\begin{proposition}
The probability distribution functions $\psi^{x_1}$ and $\psi^{x_2}$ satisfy the 
heat equation: for $\alpha \in \{1,2\}$,
\begin{equation}\label{eq:diff}
\partial_t \psi^{x_\alpha} - \beta^{-1} \partial_{x_\alpha,x_\alpha} \psi^{x_
\alpha}= 0 \text{ on $\T$}.
\end{equation}
\end{proposition}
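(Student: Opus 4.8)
The plan is to derive the heat equation for each marginal directly from the Fokker–Planck equation~\eqref{eq:FP} by integrating out the remaining variables. I will focus on $\psi^{x_1}$; the computation for $\psi^{x_2}$ is symmetric. First I would integrate~\eqref{eq:FP} with respect to $x_2$ and $x_{3\ldots n}$. Since $\T^n$ is compact and without boundary, all terms in divergence form that involve a derivative in a variable being integrated out will vanish: thus the $\div(\nabla V\,\psi)$ contribution reduces to $-\partial_{x_1}\!\int \partial_{x_1}V\,\psi\, dx_2\,dx_{3\ldots n}$, the $\beta^{-1}\div(\nabla\psi)$ contribution reduces to $\beta^{-1}\partial_{x_1,x_1}\psi^{x_1}$, and the biasing term $-\partial_{x_2}((A^2_t)'(x_2)\psi)$ disappears entirely upon integration in $x_2$.

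The remaining biasing term, $-\partial_{x_1}((A^1_t)'(x_1)\psi)$, integrates to $-\partial_{x_1}\!\big((A^1_t)'(x_1)\,\psi^{x_1}(t,x_1)\big)$, because $(A^1_t)'(x_1)$ depends only on $x_1$ and so pulls out of the integral over the other variables. The key step is then to recognize, from the definition of $(A^1_t)'$ in~\eqref{eq:FP}, that
\begin{equation*}
(A^1_t)'(x_1)\,\psi^{x_1}(t,x_1) = \int \partial_{x_1} V(x)\,\psi(t,x)\, dx_2\, dx_{3\ldots n},
\end{equation*}
which is exactly the term coming from the $\div(\nabla V\,\psi)$ piece. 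Hence these two contributions cancel, and one is left with $\partial_t \psi^{x_1} = \beta^{-1}\partial_{x_1,x_1}\psi^{x_1}$, which is~\eqref{eq:diff}.

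There is essentially no hard part here; the only thing to be careful about is justifying the interchange of $\partial_t$ with the spatial integral and the vanishing of the boundary-type terms, both of which are immediate given the smoothness of $V$ and $\xi_\alpha$ assumed in the paper and the compactness of $\T^n$ (so $\psi(t,\cdot)$ and its derivatives are integrable and periodic, killing all total-derivative terms in the integrated-out directions). The cancellation of the drift term against the force term is the whole point, and it is precisely the defining property of $(A^1_t)'$ as a conditional expectation that makes it work. I would also remark that this is the PDE translation of the probabilistic identity $\partial_t\E(\varphi(\xi_\alpha(X_t))) = \beta^{-1}\E(\varphi''(\xi_\alpha(X_t)))$ stated in the introduction, obtained by applying It\^o's formula to $\varphi(\xi_\alpha(X_t))$ and using the tower property of conditional expectation to cancel the drift.
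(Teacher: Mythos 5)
Your proof is correct and is exactly the argument the paper intends: its one-line proof simply says to integrate the Fokker--Planck equation~\eqref{eq:FP} over the $x_i$ for $i\neq\alpha$, and you have carried this out in full, correctly identifying the vanishing of the periodic divergence terms and the cancellation of the potential term against the biasing term (via the defining identity $(A^1_t)'(x_1)\,\psi^{x_1}=\int\partial_{x_1}V\,\psi\,dx_2\,dx_{3\ldots n}$) as the whole content of the result. The only blemish is a sign slip: since \eqref{eq:FP} reads $\partial_t\psi=\div(\nabla V\,\psi+\cdots)-\partial_{x_1}((A^1_t)'\psi)-\cdots$, the potential term contributes $+\partial_{x_1}\!\int\partial_{x_1}V\,\psi\,dx_2\,dx_{3\ldots n}$ (not $-$), which is precisely what cancels the $-\partial_{x_1}\big((A^1_t)'\psi^{x_1}\big)$ term.
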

This property is easy to demonstrate by integrating the partial 
differential equation satisfied by $\psi$ in~\eqref{eq:FP} over the $x_i$, for $i 
\in \{1, \ldots n\} \setminus \{\alpha\}$.

As a simple consequence of~\eqref{eq:diff}, the 
marginal laws $\psi^{x_1}$ and $\psi^{x_2}$ converge to their equilibrium value 
$1_{\T}$ exponentially fast with rate $$r=4 \pi^2,$$ for example in the following 
relative entropy sense (see Definition~\ref{def:LSI} below):
\begin{equation}\label{eq:cv_marg}
\int \psi^{x_\alpha}(t,\cdot) \ln ( \psi^{x_\alpha}(t,\cdot) ) \le \int \psi^{x_
\alpha}(0,\cdot) \ln ( \psi^{x_\alpha}(0,\cdot) ) \exp(- 2 \beta^{-1} r t).
\end{equation}

\subsection{Stationary state}

If $A^1_t$ and $A^2_t$ reach a stationary state $A^1_\infty$ and $A^2_\infty$, it 
is standard that the stationary probability distribution function in~\eqref
{eq:FP} 
is: $$\psi_\infty(x) \varpropto \exp (-\beta ( V(x) -  A^1_\infty (x_1) - A^2_
\infty (x_2) ) ).$$ Thus, proving the existence of a stationary state is 
tantamount to proving the existence of a couple $(A^1_\infty,A^2_
\infty)$ solution to (note that the functions $A^\alpha_\infty$ are 
defined up to an additive function):
\begin{equation} \label{eq:FP_SS}
\left\lbrace
\begin{aligned}
(A^1_\infty)' (x_1) &=  \frac{\displaystyle \int \partial_{x_1} V (x) \exp (-
\beta 
( V(x) - A^2_\infty (x_2) ) )  \, dx_2 \, dx_{3 \ldots n}} {\displaystyle\int  
\exp (-\beta ( V(x)  - A^2_\infty (x_2) ) ) \, dx_2 \, dx_{3 \ldots n}},\\
(A^2_\infty)' (x_2) &=  \frac{\displaystyle\int \partial_{x_2} V (x) \exp (-\beta 
( V(x) -  A^1_\infty (x_1)  ) ) \, dx_1 \, dx_{3 \ldots n}} {\displaystyle\int 
\exp (-\beta ( V(x) -  A^1_\infty (x_1)  ) ) \, dx_1 \, dx_{3 \ldots n}}.
\end{aligned}
\right.
\end{equation}
Let us set $$\rho^\alpha (x_\alpha) = 
\exp(- \beta A^\alpha_\infty (x_\alpha) ).$$
Finding a solution to~\eqref{eq:FP_SS} is 
then equivalent to find a couple $(\rho^1,\rho^2)$ solution to 
(note that the functions $\rho^\alpha$ are defined up to a multiplicative 
constant):
\begin{equation} \label{eq:FP_SS_rho}
\left\lbrace
\begin{aligned}
\rho^1(x_1) &=  \int \frac{\displaystyle \exp (-\beta  V(x)) } {\displaystyle 
\rho^2(x_2)  } \, dx_2 \, dx_{3 \ldots n},\\
\rho^2(x_2) &=   \int \frac{\displaystyle \exp (-\beta  V(x)) } {\displaystyle 
\rho^1(x_1)   }\, dx_1 \, dx_{3 \ldots n}.
\end{aligned}
\right.
\end{equation}
\begin{proposition}
Let us assume that $V$ is a continuous function on $\T^n$. Then, there exists a 
solution to~\eqref{eq:FP_SS_rho}, and thus there exists a stationary state $
(\psi_
\infty,A^1_\infty,A^2_\infty)$ to~\eqref{eq:FP}.
\end{proposition}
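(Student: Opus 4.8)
The plan is to recognize that the stationary system \eqref{eq:FP_SS_rho} is a Sinkhorn/Schr\"odinger-type fixed-point problem for a positive continuous kernel, and to solve it by Schauder's theorem. First I would introduce $g(x_1,x_2)=\int \exp(-\beta V(x_1,x_2,x_{3\ldots n}))\,dx_{3\ldots n}$; since $V$ is continuous on the compact set $\T^n$, there are constants $0<m\le g\le M<\infty$ on $\T^2$. Then \eqref{eq:FP_SS_rho} reads $\rho^1=\Phi(\rho^2)$ and $\rho^2=\Psi(\rho^1)$, where $\Phi(\rho^2)(x_1)=\int_\T g(x_1,x_2)/\rho^2(x_2)\,dx_2$ and $\Psi$ is the symmetric operator. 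Note the scale invariance $(\rho^1,\rho^2)\mapsto(\lambda\rho^1,\lambda^{-1}\rho^2)$ already observed in the text: one can only hope to find a solution up to this one-parameter family; accordingly $\Phi$ and $\Psi$ are positively homogeneous of degree $-1$ and $T:=\Psi\circ\Phi$ of degree $1$.

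Next I would apply Schauder's theorem to a normalized version of $T$ on $\mathcal{K}=\{\rho\in C(\T):1\le\rho\le M/m\}$, which is nonempty, convex, closed and bounded in $C(\T)$. The crucial a priori bound is a bounded-oscillation estimate: for any positive $\rho^2$, $\Phi(\rho^2)(x_1)/\Phi(\rho^2)(x_1')\le \sup_{x_2}g(x_1,x_2)/g(x_1',x_2)\le M/m$, and likewise for $\Psi$; hence the image of $T$ consists of positive continuous functions with $\max/\min\le M/m$, so that $S(\rho^2):=T(\rho^2)/\min_\T T(\rho^2)$ maps $\mathcal{K}$ into itself. Explicit bounds such as $m/M\le T(\rho^2)\le M^2/m^2$ keep the denominator away from $0$, so $S$ is continuous for the sup norm by bounded convergence; and the estimate $|\Phi(\rho^2)(x_1)-\Phi(\rho^2)(x_1')|\le\sup_{x_2}|g(x_1,x_2)-g(x_1',x_2)|$ together with the uniform continuity of $g$ makes $\Phi(\mathcal{K})$ equicontinuous, whence $S(\mathcal{K})$ is relatively compact by Arzel\`a--Ascoli. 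Schauder's theorem then produces $\rho^2_\ast\in\mathcal{K}$ with $T(\rho^2_\ast)=c\,\rho^2_\ast$, where $c=\min_\T T(\rho^2_\ast)>0$.

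The main obstacle is that this only yields an eigenfunction of $T$, whereas a solution of \eqref{eq:FP_SS_rho} requires the eigenvalue $c$ to equal $1$. I would close this gap with the conservation identity $\int_{\T^2} g(x_1,x_2)\big(\rho^1(x_1)\rho^2(x_2)\big)^{-1}dx_1dx_2=1$, valid as soon as $\rho^1=\Phi(\rho^2)$ (it reduces to $\int_\T\Phi(\rho^2)(x_1)/\rho^1(x_1)\,dx_1=\int_\T 1\,dx_1=1$ using $|\T|=1$). Applied to $\rho^1_\ast:=\Phi(\rho^2_\ast)$ it gives the value $1$; on the other hand, using $\rho^2_\ast=c^{-1}\Psi(\rho^1_\ast)$ and integrating first in $x_1$ gives the value $c$. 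Hence $c=1$, so $T(\rho^2_\ast)=\rho^2_\ast$ and $(\rho^1_\ast,\rho^2_\ast)=(\Phi(\rho^2_\ast),\rho^2_\ast)$ solves \eqref{eq:FP_SS_rho}, both components being continuous and bounded above and away from zero by the estimates above. Undoing the change of unknowns $\rho^\alpha=\exp(-\beta A^\alpha_\infty)$ then gives a solution of \eqref{eq:FP_SS}, hence, as explained just before the statement, a stationary state $(\psi_\infty,A^1_\infty,A^2_\infty)$ of \eqref{eq:FP} with $\psi_\infty\varpropto\exp(-\beta V)/(\rho^1\rho^2)$.

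As a remark, there is a more conceptual alternative: \eqref{eq:FP_SS_rho} says exactly that the probability density on $\T^2$ proportional to $g(x_1,x_2)/(\rho^1(x_1)\rho^2(x_2))$ has both marginals uniform, i.e. it solves a static Schr\"odinger problem; one could instead minimize the relative entropy of a coupling with respect to $g(x_1,x_2)\,dx_1\,dx_2$ over the weakly compact convex set of couplings of the uniform law with itself (nonempty, as it contains the product law), obtain a unique minimizer by lower semicontinuity and strict convexity, and read off the product structure $a(x_1)b(x_2)g(x_1,x_2)$ of its density from the first-order optimality conditions. I would nonetheless favour the Schauder route, because the passage to the continuous-potential product form in the entropic approach needs its own bootstrap argument, whereas in the Schauder proof every step is routine once the two-sided bound $0<m\le g\le M$ is in hand.
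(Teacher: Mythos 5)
Your proposal is correct, and it rests on the same two ingredients as the paper's proof: the reduction of~\eqref{eq:FP_SS_rho} to a fixed-point problem for a positive integral operator on $\T$ built from the two-dimensional kernel $g=\exp(-\beta A)$, the two-sided bound $0<m\le g\le M$ coming from continuity of $V$ on the compact $\T^n$, and an Arzel\`a--Ascoli compactness argument in $C(\T)$. Where you differ is in the fixed-point mechanism. The paper runs an explicit iteration $\rho^1_{n+1}=Z^1_{n+1}\,F(\rho^1_n)$, normalized by $\int 1/\rho^1_{n+1}=1$, establishes the uniform bounds $a^2/b^2\le\rho^1_n\le b^2/a^2$ and equicontinuity, and concludes from compactness of the orbit; strictly speaking this only extracts a convergent subsequence, and identifying its limit as a fixed point needs an extra word. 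You instead apply Schauder's theorem directly to the normalized composed map $S=T/\min T$ on the convex set $\mathcal K$, which yields a genuine fixed point at the cost of introducing a spurious eigenvalue $c$, and your conservation identity $\int_{\T^2} g/(\rho^1\rho^2)=1$ (computed once in each order of integration) is a clean and correct way to force $c=1$ --- note that it plays the same role as the paper's observation $a/b\le Z^1_{n+1}\le b/a$, namely controlling the normalization, but does so exactly rather than up to bounds. All your estimates (the oscillation bound $\max S/\min S\le M/m$, the lower bound $\min T\ge m/M$ keeping the division continuous, the equicontinuity from uniform continuity of $g$) check out. In short: same analytic core, different and arguably more airtight fixed-point argument; the paper's iteration has the advantage of suggesting a practical numerical scheme, while your Schauder route closes the subsequence gap and your entropic remark correctly identifies the problem as a Schr\"odinger/Sinkhorn system, which would also give uniqueness of the normalized solution --- something the paper defers to the convergence theorem under the additional assumption~\eqref{eq:lsi_large}.
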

\begin{proof}
Let us build a sequence of continuous functions $\rho^1_n: \T \to \R_+^*$ as ($
\rho^1_0=1$): $$\rho^1_{n+1}(x_1) = Z^1_{n+1}  \int \frac{\displaystyle \exp (-
\beta  A(x_1,x_2)) } {\displaystyle \int \frac{\displaystyle \exp (-\beta  A
(x_1,x_2)) } {\displaystyle \rho^1_n(x_1)   }\, dx_1  } \, dx_2 ,$$ where $Z^1_{n
+1}$ is chosen such that $\int 1 / \rho^1_{n+1}(x_1) \, dx_1=1$ and $A$ is the 
free energy~\eqref{eq:FE} introduced above. It is clear that if $(\rho^1_n)_{n 
\ge 0}$ converges in $L^\infty(\T)$ to $\rho^1_\infty$, then $\rho^1=\rho^1_
\infty$ and $\displaystyle \rho^2=\int \frac{\displaystyle \exp (-\beta  A
(x_1,x_2)) }  {\displaystyle \rho^1_\infty(x_1)   }\, dx_1$ is a solution to~
\eqref  {eq:FP_SS_rho}. We use the Arzel\`a--Ascoli theorem to show that 
$(\rho^1_n)_{n \ge 0}$ is a compact sequence in the space of 
real-valued
continuous functions over $\T$, endowed with the $L^\infty$-norm, which 
concludes the existence proof.

It first ought to be noted that since $V$ is continuous, there 
exists positive reals $a,b$ such that $0 < a \le \exp(-\beta A) \le b$ on $\T^2$. 
We, hence, have, for all $x_1 \in \T$,
\begin{equation*}
\int \frac{\displaystyle \exp (-\beta  A(x_1,x_2)) } {\displaystyle \int \frac
{\displaystyle \exp (-\beta  A(x_1,x_2)) } {\displaystyle \rho^1_n(x_1)   }\, 
dx_1  } \, dx_2
\ge  \int \frac{\displaystyle \exp (-\beta  A(x_1,x_2)) } {\displaystyle b \int  
\frac{\displaystyle 1 } {\displaystyle \rho^1_n(x_1)   }\, dx_1  } \, dx_2  \ge 
\frac{a}{b}.
\end{equation*}
Likewise,
\begin{equation*}
\int \frac{\displaystyle \exp (-\beta  A(x_1,x_2)) } {\displaystyle \int \frac
{\displaystyle \exp (-\beta  A(x_1,x_2)) } {\displaystyle \rho^1_n(x_1)   }\, 
dx_1  } \, dx_2 \le \frac{b}{a}.
\end{equation*}
>From this, one also obtains $  \frac{a}{b} \le Z^1_{n+1}  \le \frac{b}{a}$ which 
implies: $\forall n \ge 0$, $$\frac{a^2}{b^2} \le \rho^1_n \le \frac{b^2}{a^2} .
$$ The equicontinuity property remains to be checked to conclude the 
proof with the Arzel\`a--Ascoli theorem. This property is, however, 
straightforward to check by noting 
that $$|\rho^1_n(x_1) - \rho^1_n(x_1')| \le \frac{b^2}{a^2}  \int  |\exp (-\beta  
A(x_1,x_2)) - \exp (-\beta  A(x_1',x_2)) | \, dx_2. $$
\end{proof}

This altogether proves the existence of a stationary state. 
Its uniqueness is a consequence of the convergence result stated 
in the next section, which holds under an additional
weak-coupling assumption 
(see~\eqref{eq:lsi_large} below). 

A consequence of~\eqref{eq:FP_SS_rho}, which is also consistent 
with~\eqref{eq:diff}, is that the marginal probability density functions (see~\eqref{eq:marginals}) of $\psi_\infty$ along $x_1$ and $x_2$ are uniform:
$$\psi_\infty^{x_1}(x_1)=\int  \psi_\infty(x) \, dx_2 \, dx_{3 \ldots n}=1 \text
{ and } \psi_\infty^{x_2}(x_2)=\int  \psi_\infty(x) \, dx_1 \, dx_{3 \ldots n}
=1.$$

Last, it is worth noting that by and large $A^1_\infty$ and 
$A^2_\infty$ are not the free energies associated to $x_1$ and 
$x_2$ and defined as --- which should be compared with the 
definition of the bidimensional free energy~\eqref{eq:FE}:
\begin{equation}\label{eq:FE_1d}
\begin{aligned}
A^1(x_1) &= - \beta^{-1} \ln \left( \int \exp(-\beta V(x_1,x_2,x_{3 \ldots n})) 
dx_2 d x_{3 \ldots n} \right), \\
A^2(x_2) &= - \beta^{-1} \ln \left( \int \exp(-\beta V(x_1,x_2,x_{3 \ldots n})) 
dx_1 d x_{3 \ldots n} \right) .
\end{aligned}
\end{equation}
Actually, a special case for which $A^1_\infty=A^1$ and $A^2_
\infty=A^2$ (up to additive constants) is the decoupled case, namely if the 
two-dimensional free energy $A$ (defined by~\eqref{eq:FE}) writes as a sum of a 
function of $x_1$ and a function of $x_2$, which is equivalent, 
up to an additive constant, to:
\begin{equation}\label{eq:decoupled_A}
A(x_1,x_2)=A^1(x_1) + A^2(x_2).
\end{equation}
Under these premises, it is easy to check that $\rho^\alpha=\exp
(-\beta A^\alpha)$ is the unique solution to~\eqref{eq:FP_SS_rho},
up to a multiplicative constant. It ought to be 
noted that~\eqref{eq:decoupled_V} implies~\eqref{eq:decoupled_A}.

\subsection{Convergence}

Let us now consider a stationary state $(\psi_\infty,A^1_
\infty,A^2_\infty)$ to~\eqref{eq:FP}. The aim of this section is to prove the 
convergence of~\eqref{eq:FP} to this stationary state.

Let us first introduce the conditional probability density functions:
$$\psi_{\infty |x_1} (x_2, x_{3 \ldots n} ) = \frac{\psi_\infty (x_1,x_2,x_{3 
\ldots n} )}{\psi_\infty^{x_1}(x_1)}=\psi_\infty (x_1,x_2,x_{3 \ldots n} )$$
and $$\psi_{\infty |x_2} (x_1, x_{3 \ldots n} ) = \frac{\psi_\infty (x_1,x_2,x_{3 
\ldots n} )}{\psi_\infty^{x_2}(x_2)}=\psi_\infty (x_1,x_2,x_{3 \ldots n} ).$$
In our particular case, this consists only in freezing one variable
of $\psi_\infty$.

To achieve this objective, we will need tools related to 
logarithmic Sobolev inequalities. We recall that (see~\cite{ABC-00,arnold-markowich-toscani-unterreiter-01,villani-03}):

\begin{definition}\label{def:LSI}
A probability measure $\nu$ is said to satisfy a logarithmic Sobolev inequality 
with constant $\rho>0$ (in short: LSI($\rho$)) if for all probability measures 
$\mu$ such that $\mu$ is absolutely continuous with respect to $\nu$ (denoted $
\mu \ll \nu$ in the following), $$H(\mu|\nu) \leq \frac{1}{2 \rho} I (\mu | \nu),
$$ where $$H(\mu | \nu)=\int \ln \left(\frac{d\mu}{d\nu} \right) d \mu$$ is the 
relative entropy of $\mu$ with respect to $\nu$ and $$I(\mu | \nu)=\int
\left|\nabla \ln \left(\frac{d\mu}{d\nu} \right)\right|^2 d \mu$$ is the 
so-called Fisher information of $\mu$ with respect to $\nu$.
\end{definition}

Since the measures $\psi_{\infty | x_1}(x_2, x_{3 \ldots n}) \, dx_2 d x_{3 
\ldots n}$ and $\psi_{\infty | x_2}(x_1, x_{3 \ldots n}) \, dx_1 d x_{3 \ldots n}
$ are defined on a compact space ($\T^{n-1}$) and since $\psi_\infty$ is smooth, 
it follows from standard arguments --- e.g. Holley-Stroock 
criterion, that they satisfy logarithmic Sobolev inequalities. 
Let us introduce the associated positive constants $\rho_1$ and $\rho_2$: 
\begin{equation}\label{eq:lsi}
\begin{aligned}
&\text{ $\psi_{\infty | x_1}(x_2, x_{3 \ldots n}) \, dx_2 d x_{3 \ldots n}$ 
(resp. $\psi_{\infty |     x_2}(x_1, x_{3 \ldots n}) \, dx_1 d x_{3 \ldots n}$) 
satisfies}\\
&\text{a LSI with constant $\rho_1$ (resp. $\rho_2$) for all $x_1 \in \T$ (resp. 
$x_2 \in \T$).}
\end{aligned}
\end{equation}
We also need to introduce the coupling constants $$\kappa_1=\| 
\nabla_{x_2,x_{3 \ldots n}} (\partial_{x_1} V ) \|_{L^\infty(\T^n)} \text{ and } 
\kappa_2=\| \nabla_{x_1,x_{3 \ldots n}} (\partial_{x_2} V ) \|_{L^\infty(\T^n)}$$
which are well defined since $V$ is assumed to be smooth over $\T^n$. It is worth 
noting that $\kappa_1=0$ or $\kappa_2=0$ is equivalent 
to~\eqref{eq:decoupled_V}, which implies the relation~\eqref{eq:decoupled_A}. 
This motivates the terminology of coupling constants.

For the convergence result to hold, we need the coupling constants to be 
sufficiently small compared to the logarithmic Sobolev constants $\rho_1$ and $
\rho_2$:
\begin{equation}\label{eq:lsi_large}
\rho_1 \rho_2 \ge \beta^2 \kappa_1 \kappa_2.
\end{equation}
We are now in a position where we can state the main mathematical result 
of this contribution.
\begin{theorem}\label{th:CV}
Let us assume~\eqref{eq:lsi_large}. The probability density
function $\psi(t,\cdot)$ then
converges to $\psi_\infty$ exponentially fast: for any $\varepsilon \in 
(0,\lambda)$, $\exists C >0$, $\forall t \ge 0$
\begin{equation}\label{eq:CV_L1}
\int_{\T^n} |\psi(t,x) -\psi_\infty(x)| \, dx \le C \exp \bigg(- \beta^{-1} \min
\Big( (\lambda - \varepsilon) , r \Big) t \bigg)
\end{equation}
where $r=4 \pi^2$ and
\begin{equation}\label{eq:lambda}
\lambda=\frac{\rho_1 + \rho_2 - \sqrt{(\rho_1- \rho_2)^2 + \frac{ 4 \kappa_1 
\kappa_2 }{\rho_1 \rho_2} }}{4}
\end{equation}
is a positive constant. Furthermore, for any positive time $t_0$ 
and  $\varepsilon \in (0,\lambda)$, $\exists \bar{C} >0$, $\forall t \ge t_0$,
\begin{equation}\label{eq:CV_A}
\int_{\T} |(A^\alpha_t)' - (A^\alpha_\infty)'|^2  \le \bar{C} \exp\bigg(-2 \beta^
{-1} \min\Big( (\lambda - \varepsilon) , r \Big) t \bigg),
\end{equation}
where $\alpha \in \{1,2\}$.
\end{theorem}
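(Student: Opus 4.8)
The plan is to prove convergence by an entropy/relative-entropy method, controlling the time derivative of a suitable Lyapunov functional that compares $\psi(t,\cdot)$ to $\psi_\infty$. The natural candidate is $E(t) = H(\psi(t,\cdot) \,|\, \psi_\infty)$, but since the marginals $\psi^{x_1}$ and $\psi^{x_2}$ evolve autonomously by the heat equation \eqref{eq:diff} and do \emph{not} in general coincide with the marginals of $\psi_\infty$ (except that they both converge to $1_\T$), the entropy should be split. I would write
\begin{equation*}
H(\psi(t,\cdot) \,|\, \psi_\infty) = \E\big( H(\psi_{|x_1}(t,\cdot) \,|\, \psi_{\infty|x_1}) \big) + H(\psi^{x_1}(t,\cdot) \,|\, 1_\T),
\end{equation*}
and symmetrically for $x_2$, where $\psi_{|x_1}$ is the conditional law of $(x_2,x_{3\ldots n})$ given $x_1$. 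The second term decays like $\exp(-2\beta^{-1} r t)$ by \eqref{eq:cv_marg}, so the real work is on the conditional-entropy term. I would define a two-component Lyapunov functional $\big(e_1(t), e_2(t)\big)$ with $e_\alpha(t) = \E\big( H(\psi_{|x_\alpha}(t,\cdot) \,|\, \psi_{\infty|x_\alpha}) \big)$, or perhaps its square root, and aim for a \emph{differential inequality of system type}, of the form $\frac{d}{dt}\binom{e_1}{e_2} \le \beta^{-1} M \binom{e_1}{e_2}$ for a $2\times 2$ matrix $M$ whose off-diagonal entries carry the coupling constants $\kappa_1,\kappa_2$ and whose diagonal entries carry $-\rho_1, -\rho_2$ (plus possibly $r$-terms). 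Diagonalizing $M$ and checking that its spectral abscissa is $-\lambda$ with $\lambda$ given by \eqref{eq:lambda} — this is exactly where condition \eqref{eq:lsi_large} $\rho_1\rho_2 \ge \beta^2\kappa_1\kappa_2$ enters, as the condition that keeps the dominant eigenvalue negative — would then give exponential decay of $e_1+e_2$, hence of $H(\psi(t,\cdot)\,|\,\psi_\infty)$, and \eqref{eq:CV_L1} follows by Csiszár--Kullback--Pinsker.

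The central computation is the time derivative of $e_\alpha(t)$. Differentiating $\int \psi_{|x_1} \ln(\psi_{|x_1}/\psi_{\infty|x_1})$ and using the Fokker--Planck equation \eqref{eq:FP}, I expect the diffusion in the $(x_2,x_{3\ldots n})$ directions to produce a good negative Fisher-information term $-\beta^{-1} I(\psi_{|x_1} \,|\, \psi_{\infty|x_1})$, which by the logarithmic Sobolev inequality \eqref{eq:lsi} (constant $\rho_1$) is bounded above by $-2\beta^{-1}\rho_1 H(\psi_{|x_1} \,|\, \psi_{\infty|x_1})$; integrating over $x_1$ gives $-2\rho_1 e_1$. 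The remaining terms come from (a) the $x_1$-transport and $x_1$-diffusion acting on a conditional density indexed by $x_1$, and (b) the discrepancy between the current adaptive force $(A^1_t)'$ and the stationary force $(A^1_\infty)'$, and between $(A^2_t)'$ and $(A^2_\infty)'$. The force discrepancy in the $x_2$-direction is what must be bounded by $\kappa_2$ times (something controlled by $e_2$): one writes $(A^2_t)'(x_2) - (A^2_\infty)'(x_2)$ as an integral of $\partial_{x_2}V$ against the difference of two conditional-in-$x_2$ densities, and — after an integration by parts exploiting that $\partial_{x_2}V$ has Lipschitz constant $\kappa_2$ in the complementary variables, or a Cauchy--Schwarz/LSI argument — obtains $\int_\T |(A^2_t)' - (A^2_\infty)'|^2 \le C\,\kappa_2^2\, e_2(t)$ (this same estimate, once we have $e_2(t) \to 0$ exponentially, yields \eqref{eq:CV_A}). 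Feeding this into the cross term in $\frac{d}{dt}e_1$ produces a contribution bounded by $\beta^{-1}\kappa_1\kappa_2 \rho_1^{-1}$-type coefficient times $e_2$, which is the off-diagonal entry of $M$.

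For the final claim \eqref{eq:CV_A}: once exponential decay of $e_1(t)+e_2(t)$ is established with rate $2\beta^{-1}\min(\lambda-\varepsilon, r)$, combine it with the estimate $\int_\T |(A^\alpha_t)' - (A^\alpha_\infty)'|^2 \le C\kappa_\alpha^2 e_\alpha(t)$ derived above; the restriction to $t \ge t_0 > 0$ accounts for possibly non-smooth or merely $L^1$ initial data, using the instantaneous regularization of the parabolic equation \eqref{eq:FP} to ensure $\psi(t_0,\cdot)$ is smooth and bounded below, so that $e_\alpha(t_0) < \infty$.

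The main obstacle I anticipate is twofold. First, getting the \emph{system} differential inequality rather than a scalar one — ordinary relative-entropy arguments give a single Grönwall inequality, but here the two directions feed back into each other, so one must carefully track which entropy controls which force discrepancy and assemble the $2\times 2$ matrix so that its eigenvalues are precisely $(\rho_1+\rho_2 \pm \sqrt{(\rho_1-\rho_2)^2 + 4\kappa_1\kappa_2/(\rho_1\rho_2)})/4$; matching the stated $\lambda$ forces a particular (and slightly delicate) choice of how the Fisher-information surplus is split between absorbing its own entropy term and absorbing the cross term via Young's inequality. Second, the cross-term estimate bounding $\|(A^\alpha_t)' - (A^\alpha_\infty)'\|_{L^2(\T)}$ by the conditional entropy: this requires expressing the force difference as an average of $\partial_{x_\alpha}V$ against a difference of probability measures that differ only in the complementary variables, then using that $\partial_{x_\alpha}V$ is Lipschitz with constant $\kappa_\alpha$ there (equivalently, has bounded gradient in those variables) together with a transport/Fisher control — a Cauchy--Schwarz step combined with the LSI, or a direct Otto--Villani-type $W_1 \le W_2 \lesssim \sqrt{H}$ argument. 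Everything else — the $x_\alpha$-diffusion and $x_\alpha$-transport terms acting on the conditional densities, and the passage from $H$-decay to $L^1$-decay — is routine.
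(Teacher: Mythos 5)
Your proposal follows essentially the same route as the paper's proof: the decomposition $E=E^\alpha_M+E^\alpha_m$ into marginal and conditional relative entropies, a coupled system of differential inequalities for the two conditional entropies whose $2\times 2$ matrix has diagonal $-\rho_\alpha$ and off-diagonal $\kappa$-terms, the bound $|(A^\alpha_t)'-(A^\alpha_\infty)'|\le\kappa_\alpha W(\psi_{|x_\alpha},\psi_{\infty|x_\alpha})\le\kappa_\alpha\sqrt{2H/\rho_\alpha}$ via Talagrand, and Csisz\'ar--Kullback at the end, with the $t\ge t_0$ restriction indeed coming from the positive lower bound on $\psi^{x_\alpha}$ needed to pass from the $\psi^{x_\alpha}$-weighted to the Lebesgue $L^2$ norm in \eqref{eq:CV_A}. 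The only points you leave implicit --- the split of the Fisher information between absorbing $-\rho_\alpha E^\alpha_m$ and the Young-inequality absorption of the cross term, and the $\eta\to 0$ limit that produces the $\varepsilon$-loss --- are exactly the "delicate choices" you flag, and they are resolved in the paper as you anticipate.
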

The interpretation of this theorem is that, if the coupling constants $\kappa_1$ 
and $\kappa_2$ are sufficiently small, the dynamics converges exponentially fast 
with a rate essentially limited by $\displaystyle\frac{\min(\rho_1,\rho_2)}{2}$ 
(namely $\lambda$ when $\kappa_1=0$ or $\kappa_2=0$). This constant is expected 
to be larger than the logarithmic Sobolev constant of the original measure $\mu$ 
(which gives the rate of convergence of the original dynamics~\eqref{eq:Q}) if $
\xi_1$ and $\xi_2$ are well chosen {---} see related discussions 
in the work~\cite{lelievre-rousset-stoltz-08,lelievre-09}.

\begin{proof}
The proof is an adaptation of the proof for the long-time 
convergence of the ABF process, which can be found in~\cite
{lelievre-rousset-stoltz-08}. It can be assumed without loss of 
generality that $\beta=1$ up to the following change of variable: $\tilde{t}=
\beta^{-1} t$, $\tilde{\psi}(\tilde{t},x)=\psi(t,x)$ and $\tilde{V}(x)=\beta
V(x)$.

Let us first rewrite the partial differential equation satisfied by $\psi$ as:
$$\partial_t \psi = \div \left( \psi \, \nabla \left( \frac{\psi}{\psi_\infty} 
\right) \right) + \partial_{x_1} \left( ( (A^1_\infty)' - (A^1_t)' ) \psi \right) 
+ \partial_{x_2} \left( ( (A^2_\infty)' - (A^2_t)' ) \psi \right).$$ Let us 
consider {then} the relative entropies $$E(t)= H(\psi | \psi_
\infty) = \int_{\T^n} \psi(t,\cdot) \ln (\psi(t,\cdot) / \psi_\infty)$$
and, for $\alpha \in \{1,2\}$, $$E^\alpha_M(t)= H(\psi^{x_\alpha} | \psi^{x_
\alpha}_\infty) = \int_{\T} \psi^{x_\alpha}(t,\cdot) \ln (\psi^{x_\alpha}(t,
\cdot)).$$ The aim of the present proof is to show that $E(t)$ 
converges to $0$ exponentially fast. It is already clear from~\eqref{eq:diff} 
that $E^\alpha_M(t)$ converges to zero exponentially fast (see~\eqref
{eq:cv_marg}), so that it is enough to consider $$E_m^\alpha(t)=E(t) - E^\alpha_M
(t)= \int_{\T} H (\psi_{|x_\alpha}(t, \cdot) | \psi_{\infty|x_\alpha} ) \, \psi^
{x_\alpha}(t,x_\alpha) d x_\alpha,$$ where $$H (\psi_{|x_\alpha}(t, \cdot) | 
\psi_{\infty|x_\alpha} ) = \int_{\T^{n-1}} \psi_{|x_\alpha}(t, \cdot) \ln ( \psi_
{|x_\alpha}(t, \cdot) / \psi_{\infty |x_\alpha} ) $$ is the relative entropy, 
with respect to $\psi_{\infty|x_\alpha}$, of the conditional probability density 
functions $$\psi_{|x_1} (t,x_2, x_{3 \ldots n} ) = \frac{\psi (t, x_1,x_2,x_{3 
\ldots n} )}{\psi^{x_1}(t,x_1)} \text{ and } \psi_{|x_2} (t,x_1, x_{3 \ldots n} ) 
= \frac{\psi (t,x_1,x_2,x_{3 \ldots n} )}{\psi^{x_2}(t,x_2)}.$$ 
Let us focus on the case $\alpha=1$, albeit
similar computations hold for $\alpha=2$. Let 
us compute
\begin{align}
\frac{d E^1_m}{dt} &= \frac{d E}{dt} - \frac{d E^1_M}{dt},\nonumber\\
& = - \int_{\T^n} \left| \nabla \ln \left( \frac{\psi}{\psi_\infty} \right)
\right|^2 \psi + \sum_{\gamma=1}^2 \int_{\T^n} \left((A^\gamma_t)' -( A^\gamma_
\infty)' \right) \partial_{x_\gamma} \ln \left(
\frac{\psi}{\psi_\infty} \right) \psi \label{eq:E1} \\
& \quad + \int_{\T} \left| \partial_{x_1} \ln
\left(\psi^{x_1} \right)
\right|^2 \psi^{x_1}.\nonumber
\end{align}
It is rather easy to check the following identity:
\begin{equation}\label{eq:A_A}
(A^\alpha_t)'-(A^\alpha_\infty)' = \int_{\T^{n-1}} \partial_{x_\alpha} \ln \left
(\frac{\psi}{\psi_\infty} \right)
\frac{\psi}{\psi^{x_\alpha}} \, dx_{\overline{\alpha}} \, dx_{3 \ldots n} - 
\partial_{x_\alpha} \ln \left(\psi^{x_\alpha}\right),
\end{equation}
where $\overline{\alpha}=1$ (resp. $\overline{\alpha}=2$) when $\alpha=2$ (resp. 
$\alpha=1$). Using~\eqref{eq:A_A} in~\eqref{eq:E1}, we obtain
\begin{align}
\frac{d E^1_m}{dt}&= - \int_{\T^n} \left| \partial_{x_2,x_{3 \ldots n}} \ln \left
( \frac{\psi}{\psi_\infty} \right)
\right|^2 \psi \nonumber \\
&\quad - \int_{\T^n} \left| \partial_{x_1} \ln \left( \frac{\psi}{\psi_\infty} 
\right)
\right|^2 \psi + \int_{\T} \left( \int_{\T^{n-1}} \partial_{x_1} \ln \left(
\frac{\psi}{\psi_\infty} \right) \psi \, dx_2 \, dx_{3 \ldots n} \right)^2
\frac{1}{\psi^{x_1}} \, d{x_1} \nonumber \\
& \quad - \int_{\T^n}  \partial_{x_1} \ln \left(\psi^{x_1} \right)\partial_{x_1} 
\ln \left(
\frac{\psi}{\psi_\infty} \right) \psi + \int_{\T} \left| \partial_{x_1} \ln
\left( \psi^{x_1} \right)
\right|^2 \psi^{x_1}\nonumber \\
& \quad + \int_{\T^n} \left((A^2_t)' -( A^2_\infty)' \right) \partial_{x_2} \ln 
\left(
\frac{\psi}{\psi_\infty} \right) \psi. \nonumber
\end{align}
By virtue of
the Cauchy-Schwarz inequality, the term on the second line is non-positive. 
Using again~\eqref{eq:A_A}, we, hence, have
\begin{align}
\frac{d E^1_m}{dt}& \leq - \int_{\T^n} \left| \partial_{x_2,x_{3 \ldots n}} \ln 
\left( \frac{\psi}{\psi_\infty} \right)
\right|^2 \psi - \int_{\T}  \partial_{x_1} \ln
\left( \psi^{x_1} \right)   \psi^{x_1}  \left( (A^1_t)'-(A^1_\infty)' \right) 
\nonumber \\
& \quad + \int_{\T^n} \left((A^2_t)' -( A^2_\infty)' \right) \partial_{x_2} \ln 
\left(
\frac{\psi}{\psi_\infty} \right) \psi. \label{eq:E3}
\end{align}
We now need an estimate for $|(A^1_t)'-(A^1_\infty)'|$. For any coupling measure 
$\pi \in \Pi(\psi_{|x_1}(t,\cdot) , \psi_{\infty|x_1})$, it holds:
\begin{align*}
&|(A^1_t)'(x_1)-(A^1_\infty)'(x_1)|\\
&= \left|\int_{\T^{n-1} \times \T^{n-1}} \left( \partial_{x_1} V (x_1,x_2,x_{3 
\ldots n}) - \partial_{x_1} V (x_1,x_2',x_{3 \ldots n}') \right) \,\pi( dx_2 \, 
dx_{3 \ldots n} , \, dx_2' \, dx_{3 \ldots n}')
\right|\\
& \leq \|\nabla_{x_2,x_{3 \ldots n}} (\partial_{x_1} V )   \|_{L^\infty} \int_{\R 
\times \R} |(x_2,x_{3 \ldots n})-(x_2',x_{3 \ldots n}')| \,\pi( dx_2 \, dx_{3 
\ldots n} , \, dx_2' \, dx_{3 \ldots n}')\\
& \leq \kappa_1 \int_{\R \times \R} |(x_2,x_{3 \ldots n})-(x_2',x_{3 \ldots n}')| 
\,\pi( dx_2 \, dx_{3 \ldots n} , \, dx_2' \, dx_{3 \ldots n}').
\end{align*}
Taking the infimum over all $\pi \in \Pi(\mu_{t,x},\mu_{\infty,x})$, we 
obtain
\begin{equation*}
|(A^1_t)'(x_1)-(A^1_\infty)'(x_1)| \leq \kappa_1
W(\psi_{|x_1}(t,\cdot) , \psi_{\infty|x_1})
\end{equation*}
where $W$ stands for the ($L^1$) Wasserstein distance. We 
will now resort to the fact that if $
\nu$ is a probability measure satisfying a logarithmic Sobolev inequality with 
constant $\rho$, then we have the Talagrand inequality (see~\cite{bobkov-gotze-99,otto-villani-00}): 
For all probability measures $\mu$ such that $\mu 
\ll \nu$, $$W(\mu,\nu) \leq \sqrt{\frac{2}{\rho} H (\mu | \nu)}.$$
Using~\eqref{eq:lsi} together with the Talagrand inequality, 
we, thus, obtain (the 
proof being evidently 
similar for $\alpha=2$): for $\alpha \in \{1,2\}$,
\begin{equation}\label{eq:estim_A-A}
|(A^\alpha_t)'(x_\alpha)-(A^\alpha_\infty)'(x_\alpha)| \leq  \kappa_\alpha \sqrt
{ \frac{2}{\rho_\alpha} H(\psi_{|x_\alpha}(t,\cdot) | \psi_{\infty|x_\alpha} ) }.
\end{equation}
Using this estimate in~\eqref{eq:E3}, the constants $\rho_1$ and $\rho_2$ 
introduced in~\eqref{eq:lsi}, and Cauchy-Schwarz and Young inequalities, we get:
\begin{align*}
\frac{d E^1_m}{dt}
& \leq - \frac{1}{2} \int_{\T^n} \left| \partial_{x_2,x_{3 \ldots n}} \ln \left
( \frac{\psi}{\psi_\infty} \right)
\right|^2 \psi  +  \sqrt{\int_{\T} \left| (A^1_t)'-(A^1_\infty)' \right|^2
\psi^{x_1}} \sqrt{\int_{\T} \left| \partial_{x_1} \ln
  \left(\psi^{x_1} \right)
\right|^2 \psi^{x_1}}\\
& \quad + \frac{1}{2} \int_{\T} \left| (A^2_t)'-(A^2_\infty)' \right|^2
\psi^{x_2}\\
 & \leq -  \rho_1  E_m^1 + \kappa_1  \sqrt{
  \frac{2}{\rho_1} E_m^1 } \sqrt{I(\psi^{x^1} | \psi^{x_1}_\infty)} + \frac{1}{2} 
\kappa_2^2 \frac{2}{\rho_2} E_m^2.
\end{align*}
Employing~\eqref{eq:diff}, it is standard to show that (see \cite[Lemma 12]{lelievre-rousset-stoltz-08} for example):
$$I(\psi^{x^1}(t,\cdot) | \psi^{x_1}_\infty)  \le I_0 \exp ( - 2 rt )$$
where $I_0 = I(\psi^{x^1}(0,\cdot) | \psi^{x_1}_\infty)$ and,
we recall, $r=4 \pi^2$. 
We finally find for any positive~$\eta_1$:
\begin{align*}
\frac{d E^1_m}{dt}  & \leq -  \rho_1 (1 - \eta_1) E_m^1 + \frac{\kappa_2^2}
{\rho_2} E_m^2 + \frac{\kappa_1^2}{2 \rho_1^2 \eta_1} I_0 \exp(-2rt).
\end{align*}
Utilizing a similar reasoning with $\alpha=2$, 
the following system of 
inequalities is obtained, wherein
$\eta_1,\eta_2$ are positive real numbers to be fixed:
\begin{equation*}
\left\{
\begin{aligned}
\frac{d E^1_m}{dt}  & \leq -  \rho_1 (1 - \eta_1) E_m^1 + \frac{\kappa_2^2}
{\rho_2} E_m^2 + \frac{\kappa_1^2}{2 \rho_1^2 \eta_1} I_0 \exp(-2rt),\\
\frac{d E^2_m}{dt}  & \leq -  \rho_2 (1 - \eta_2) E_m^2 + \frac{\kappa_1^2}
{\rho_1} E_m^1 + \frac{\kappa_2^2}{2 \rho_2^2 \eta_2} I_0 \exp(-2rt).
\end{aligned}
\right.
\end{equation*}
In the limit $\eta_1=\eta_2=0$, we get the linear system:
\begin{equation*}
\left\{
\begin{aligned}
\frac{d u^1}{dt}  & \leq -  \rho_1 u^1 + \frac{\kappa_2^2}{\rho_2} u^2,\\
\frac{d u^2}{dt}  & \leq -  \rho_2 u^2 + \frac{\kappa_1^2}{\rho_1} u^1,
\end{aligned}
\right.
\end{equation*}
for which it can be be shown quite simply
that, under the assumption~\eqref{eq:lsi_large},
$$\forall t \ge 0, \, 
\|(u^1,u^2)(t) \| \le \|(u^1,u^2)(0) \| \exp(-2 \lambda t)$$
where $\lambda$ is defined by~\eqref{eq:lambda}. It is then easy to 
reach the result~\eqref{eq:CV_L1}, 
employing the Csisz\'ar-Kullback inequality:
\begin{equation}\label{eq:CK}
\int | \psi - \psi_\infty  | \leq \sqrt{2 H(\psi | \psi_\infty)}
\end{equation}
and the fact that $H(\psi | \psi_\infty)=E=E_M^\alpha + E_m^\alpha$.
We refer the reader,
for example,
to the end of the proof of Theorem~1 in~\cite{lelievre-minoukadeh-10} for a similar reasoning. 

Finally, the convergence results~\eqref{eq:CV_A} on $(A_t^1)'$ and $(A_t^2)'$ are 
easily obtained from~\eqref{eq:estim_A-A} and the fact that (using~\eqref
{eq:diff}) $\psi^{x_1}$ and $\psi^{x_2}$ are bounded from below by a positive 
constant for times larger than any arbitrary small positive time, see the 
beginning of Section~3.3.2 in~\cite{lelievre-rousset-stoltz-08} for more details.
\end{proof}

\section{Discussion and generalizations}\label{sec:GABF}

\subsection{The case of nonlinear reaction coordinates}

We now would like to discuss generalizations of the approach 
introduced above, in the 
case where $(\xi_1,\xi_2)$ are 
not simply $(x_1,x_2)$, which constitutes the vast majority of
practical situations.

Let us denote $\xi_1:{\mathcal D}  \to \T$,
and $\xi_2:{\mathcal D}  \to \T$, the chosen variables
forming the multidimensional
reaction coordinate. A natural 
generalization of~\eqref{eq:GABF_simp} is the following:
\begin{equation} \label{eq:GABF}
\left\lbrace
\begin{aligned}
&dX_t=- \nabla \left(V - \sum_{\alpha=1}^2 A^\alpha_t \circ \xi_\alpha \right) 
(X_t) \, dt + \sqrt{2 \beta^{-1}} dB_t,\\
&\text{for $\alpha=1,2$}, \,\frac{d A^\alpha_t}{d z_\alpha} (z_\alpha) = \E(f^
\alpha_t(X_t) | \xi_\alpha(X_t)=z_\alpha),
\end{aligned}
\right.
\end{equation}
where $f^\alpha_t$ stand for the
so-called {\em local mean forces} associated to $\xi_\alpha$ and defined by:
\begin{equation}
\label{eq:f1}
f^1_t=\left( \frac{\nabla (V - A^2_t \circ \xi_2) \cdot \nabla \xi_1}{|\nabla 
\xi_1|^{2}}
  - \beta^{-1} \div\left(\frac{\nabla \xi_1}{|\nabla \xi_1|^{2}}\right) \right)
\end{equation}
and
\begin{equation}
\label{eq:f2}
f^2_t=\left( \frac{\nabla (V - A^1_t \circ \xi_1) \cdot \nabla \xi_2}{|\nabla 
\xi_2|^{2}}
  - \beta^{-1} \div\left(\frac{\nabla \xi_2}{|\nabla \xi_2|^{2}}\right) \right).
\end{equation}
It ought to be noted 
that~\eqref{eq:GABF} reduces to~\eqref{eq:GABF_simp} in the specific case 
of $\xi_\alpha(x)=x_\alpha$.

If $\psi$ denotes 
the probability density function of $X_t$, then the marginal 
probability density functions are:
\begin{equation*}
\psi^{\xi_\alpha}(t,z_\alpha)=\int_{\Sigma^\alpha(z_\alpha)} \psi(t,x) \, \delta_
{\xi_\alpha(x)-z_\alpha} (dx) \end{equation*}
which boils down to~\eqref{eq:marginals} in the case of $\xi_\alpha(x) =
x_\alpha$.

The conditional measure $\delta_{\xi^\alpha(x)-z_\alpha} (dx)$ 
obeys the following definition: 
For any test function $\varphi: {\mathcal D} \to \R$,
$$\int_{\mathcal D} \varphi(x) \, dx = \int_{\T} \int_{\Sigma^\alpha(z_\alpha)} 
\varphi(x) \, \delta_{\xi^\alpha(x)-z_\alpha}(dx)\, dz_\alpha,$$
where $\Sigma^\alpha(z_\alpha)=\{ x \in {\mathcal D}, \xi^\alpha(x)=z_\alpha \}$.
A corollary of the coarea formula is that:
\begin{equation}\label{eq:der_z}
\partial_{z_\alpha} \psi^{\xi_\alpha}= \int_{\Sigma^\alpha(z_\alpha)} \left
( \frac{\nabla \psi \cdot \nabla \xi_\alpha}{|\nabla \xi_\alpha|^2} +  \div \left
(\frac{\nabla \xi_\alpha}{|\nabla \xi_\alpha|^2} \right) \psi \right)\, \delta_
{\xi_\alpha(x) - z} (dx).
\end{equation}
For detailed proofs and references, the reader 
is referred to~\cite[Lemma 3.10]{lelievre-rousset-stoltz-10}.

One can check the following property, reminiscent of~\eqref{eq:diff}:
\begin{proposition}
Let us assume that
\begin{equation}\label{eq:cst_grad_xi}
|\nabla \xi_1| = |\nabla \xi_2| = 1.
\end{equation}
The probability distribution functions $\psi^{\xi_1}$ and $\psi^{\xi_2}$ satisfy 
the heat equation: for $\alpha \in \{1,2\}$,
\begin{equation}\label{eq:diff_gen}
\partial_t \psi^{\xi_\alpha} - \beta^{-1} \partial_{z_\alpha,z_\alpha} \psi^{\xi_
\alpha}= 0 \text{ on $\T$}.
\end{equation}
\end{proposition}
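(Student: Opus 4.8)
The plan is to prove the statement in weak form, testing against functions of $\xi_\alpha$ alone and using It\^o's formula, rather than manipulating the Fokker--Planck equation directly (the route used for the linear case~\eqref{eq:diff}, via the coarea identity~\eqref{eq:der_z}). Fix $\alpha=1$, the case $\alpha=2$ being identical by symmetry, and let $\varphi:\T\to\R$ be an arbitrary smooth test function. Since $\int_\T\varphi(z_1)\,\psi^{\xi_1}(t,z_1)\,dz_1=\E(\varphi(\xi_1(X_t)))$, and since testing~\eqref{eq:diff_gen} against $\varphi$ and integrating by parts on $\T$ (no boundary terms) shows that~\eqref{eq:diff_gen} holds weakly if and only if
$$\frac{d}{dt}\E(\varphi(\xi_1(X_t)))=\beta^{-1}\,\E(\varphi''(\xi_1(X_t)))$$
for every such $\varphi$, it suffices to establish this last identity.

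First I would apply It\^o's formula to $\varphi(\xi_1(X_t))$ along~\eqref{eq:GABF}, writing the drift of $X_t$ as $-\nabla V+(A^1_t)'(\xi_1)\nabla\xi_1+(A^2_t)'(\xi_2)\nabla\xi_2$ and dropping the martingale part, which has zero expectation since all coefficients are smooth on the compact torus. This yields
$$\frac{d}{dt}\E(\varphi(\xi_1(X_t)))=\E\Big[\varphi'(\xi_1)\big(-\nabla V\cdot\nabla\xi_1+(A^1_t)'(\xi_1)\,|\nabla\xi_1|^2+(A^2_t)'(\xi_2)\,\nabla\xi_1\cdot\nabla\xi_2\big)+\beta^{-1}\varphi'(\xi_1)\,\Delta\xi_1+\beta^{-1}\varphi''(\xi_1)\,|\nabla\xi_1|^2\Big].$$
Now I would invoke the normalization~\eqref{eq:cst_grad_xi}: $|\nabla\xi_1|^2=1$ turns the last term into the target $\beta^{-1}\varphi''(\xi_1)$, and $\div(\nabla\xi_1/|\nabla\xi_1|^2)=\Delta\xi_1$, so the local mean force~\eqref{eq:f1} reads $f^1_t=\nabla V\cdot\nabla\xi_1-(A^2_t)'(\xi_2)\,\nabla\xi_2\cdot\nabla\xi_1-\beta^{-1}\Delta\xi_1$. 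Grouping the remaining terms in the bracket, one checks that the piece involving $(A^2_t)'(\xi_2)$ and the piece $\beta^{-1}\Delta\xi_1$ are exactly what combine with $-\nabla V\cdot\nabla\xi_1$ to form $-f^1_t$, so that bracket equals $\varphi'(\xi_1)\big(-f^1_t+(A^1_t)'(\xi_1)\big)$. Finally, the defining relation $(A^1_t)'(z_1)=\E(f^1_t(X_t)\mid\xi_1(X_t)=z_1)$ in~\eqref{eq:GABF}, together with the tower property, gives $\E(\varphi'(\xi_1)(A^1_t)'(\xi_1))=\E(\varphi'(\xi_1)f^1_t)$, whence the two terms cancel and only $\beta^{-1}\E(\varphi''(\xi_1(X_t)))$ survives, as wanted.

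The only genuinely delicate point is the bookkeeping of the cross term $(A^2_t)'(\xi_2)\,\nabla\xi_1\cdot\nabla\xi_2$: unlike in the linear decoupled case it does not vanish on its own, and the argument hinges on the fact that the definition~\eqref{eq:f1} of $f^1_t$ is built from $\nabla(V-A^2_t\circ\xi_2)$ rather than $\nabla V$, so that this cross term is precisely the discrepancy absorbed into $-f^1_t$. Everything else is a routine It\^o computation, the use of~\eqref{eq:cst_grad_xi}, and the tower property; no regularity beyond the standing smoothness of $V$ and of the $\xi_\alpha$ is required. An equivalent alternative would start from the Fokker--Planck equation for $\psi$, integrate it against $\delta_{\xi_1(x)-z_1}(dx)$ over $\Sigma^1(z_1)$, and use~\eqref{eq:der_z} to recognize the flux contributions as $z_1$-derivatives, the cancellation of the $A$-dependent fluxes being driven by the same relation $(A^1_t)'=\E(f^1_t\mid\xi_1)$.
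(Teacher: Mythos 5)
Your proof is correct and is, up to duality, the same computation as the paper's: the paper tests the Fokker--Planck equation against $\varphi\circ\xi_1$ and integrates by parts, while you apply It\^o's formula to $\varphi(\xi_1(X_t))$ and take expectations, and the two key cancellations --- the cross term $(A^2_t)'(\xi_2)\,\nabla\xi_1\cdot\nabla\xi_2$ being absorbed into $f^1_t$ through its definition~\eqref{eq:f1}, and $(A^1_t)'(\xi_1)$ cancelling $f^1_t$ via the conditional-expectation/tower property --- are exactly the ones the paper uses. The only cosmetic difference is that by keeping the second-order It\^o term $\beta^{-1}\varphi''(\xi_1)|\nabla\xi_1|^2$ you arrive directly at the weak form of the heat equation and never need the coarea identity~\eqref{eq:der_z}, which the paper invokes in its final step.
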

\begin{proof}
Let us prove~\eqref{eq:diff_gen} for $\alpha=1$, the proof being 
evidently
similar for $\alpha=2$. For any test function $\varphi: \T \to \R$:
\begin{align*}
\int_{\T} \partial_t \psi^{\xi_1} \varphi 
&= \int_{\mathcal D} \partial_t \psi  \, \varphi\circ \xi_1 \\
&= \int_{\mathcal D} \div \Big(( \nabla (V - A^1_t \circ \xi_1 - A^2_t \circ 
\xi_2) \psi + \beta^{-1} \nabla \psi) \Big) \varphi\circ \xi_1 \\
&= - \int_{\mathcal D} \Big(  ( \nabla (V  - A^1_t \circ \xi_1- A^2_t \circ 
\xi_2) \psi + \beta^{-1} \nabla \psi) \Big) \cdot \nabla \xi_1 \, \varphi' \circ 
\xi_1 \\
&= - \int_{\mathcal D}   \nabla (V - A^2_t \circ \xi^2) \cdot \nabla \xi_1 \, 
\psi\, \varphi' \circ \xi_1
+ \int_{\mathcal D} (A^1_t)' \circ \xi_1 \psi \, \varphi' \circ \xi_1 \\
& \quad - \beta^{-1}  \int_{\mathcal D}  \nabla \psi \cdot \nabla \xi_1 \,  
\varphi' \circ \xi_1 \\
&= - \int_{\mathcal D}  \nabla  (V - A^2_t \circ \xi^2) \cdot \nabla \xi_1 \, 
\psi\, \varphi' \circ \xi_1
+ \int_{\mathcal D} f^1_t \psi\, \varphi' \circ \xi_1 \\
& \quad - \beta^{-1}  \int_{\mathcal D} \nabla \psi \cdot \nabla \xi_1 \,  
\varphi' \circ \xi_1 \\
&= - \beta^{-1} \int_{\mathcal D} \Delta \xi_1 \, \psi\, \varphi' \circ \xi_1- 
\beta^{-1} \int_{\mathcal D} \nabla \psi \cdot \nabla \xi_1 \,  \varphi' \circ 
\xi_1 \\
&= - \beta^{-1}  \int_{\T} \partial_{z_1} \psi^{\xi_1} \varphi',
\end{align*}
where we used~\eqref{eq:der_z} for the last equality. This is, 
indeed, a weak formulation of~\eqref{eq:diff_gen}.
\end{proof}

Assumption~\eqref{eq:cst_grad_xi} essentially amounts to assuming that $\xi_
\alpha$ is the signed distance to $\Sigma^\alpha(0)$. It is straightforward to 
generalize the above result by changing the assumption~\eqref{eq:cst_grad_xi} to: 
$|\nabla \xi_1|$ and $|\nabla \xi_2|$ are constant functions. It is also possible 
to generalize it to the case where
$|\nabla \xi_1|$ (resp. $|\nabla \xi_2|$) depends on 
$x$ only through $\xi_1(x)$ (resp. $\xi_2(x)$) with slight modifications of the 
definition of the functions $f^\alpha_t$.
Generalization of the convergence results of Theorem~\ref{th:CV} to this setting 
is also possible, yet we will not pursue in this direction here.

\subsection{The multi-dimensional setting}

When more than two variables describing the multidimensional reaction coordinate
are needed, the biasing procedure introduced above can be
generalized as follows:
\begin{itemize}
\item[(i)] In the case of $m$ collective variables $\xi_1, \ldots, \xi_m$, a 
natural generalization would consist in biasing the dynamics
by a potential $A^1_t\circ \xi_1 + \ldots + A^m_t \circ \xi_m$,
using a straightforward extension of the dynamics~\eqref{eq:GABF}.
It is worth noting
that the complexity is typically linear in~$m$, whereas it is exponential 
in $m$ for a standard ABF approach. For another biasing approach,
in the context 
of multiple reaction coordinates, the
reader is referred to~\cite{piana-laio-07}.
\item[(ii)] In the case of $m$ collective variables, one 
might want to keep certain collective
variables coupled, e.g. $\xi_1$ and $\xi_2$.
A natural way to build a biased dynamics in 
such a case consists in considering
a biasing potential $A^{1,2}_t\circ(\xi_1,\xi_2) + 
A^3_t\circ \xi_3 + \ldots + A^m_t \circ \xi_m$, and using
an adequate formula for updating $A^{1,2}_t$ based on the standard ABF 
approach~\eqref{eq:ABF_standard} (see~\cite[Section 5.1.1]{lelievre-rousset-stoltz-10} for example for adequate formulae 
for general~$(\xi_1,\xi_2)$).
\end{itemize}

As mentioned above and roughly speaking, biasing the potential by the sum $A^1_t 
\circ \xi_1 +
\ldots + A^m_t \circ \xi_m$ rather than by a $m$-dimensional adaptive potential
$A_t \circ (\xi_1, \ldots, \xi_m)$ --- which would yield a 
perfect diffusion along the $m$-dimensional vector $(\xi^1, \ldots, \xi^m)(X_t)$
--- is tantamount to supposing some sort
of decoupling on the collective variables $\xi^1, \ldots, \xi^m$. More
precisely, if the free energy associated to $(\xi_1, \ldots, \xi_m)$ writes as
a sum of functions $A^1 \circ \xi_1 + \ldots + A^m \circ \xi_m$, then the two
algorithms (generalized-ABF and ABF) are equivalent. 
We, therefore, expect the method to be
efficient if the collective variables are loosely coupled,
as would be the case for two dihedral
angles distant from each other in a molecule. 
Conversely, should the two collective variables
be more strongly coupled, it might
be interesting to resolve this
coupling in the adaptive potential, as discussed in
item (ii) above.

\subsection{Further generalizations}

In practice, it may be difficult to implement the dynamics~\eqref{eq:GABF}, in 
particular on account
of the computations of the analytical expressions for $f^
\alpha_t$ which may be cumbersome. A natural idea, which is, however,
not supported 
by any mathematical reasoning, 
consists in simplifying the expressions for $f^\alpha_t$, 
considering, for example:
\begin{equation}
\label{eq:f1_simp_1}
f^1=\left( \frac{\nabla V \cdot \nabla \xi_1}{|\nabla \xi_1|^{2}}
  - \beta^{-1} \div\left(\frac{\nabla \xi_1}{|\nabla \xi_1|^{2}}\right) \right),
\end{equation}
or even
\begin{equation}
\label{eq:f1_simp_2}
f^1= \frac{\nabla V  \cdot \nabla \xi_1}{|\nabla \xi_1|^{2}},
\end{equation}
and similar formulae for $f^2$. 
A very practical approach {can be stated as follows: {\em If}} 
$A^1_t$ and $A^2_t$ happen
to converge to some functions $A^1_\infty$ and $A^2_
\infty$, then it is always possible to fix this bias, and then 
to use unbiasing procedures {akin to} 
those described in Section~\ref{sec:FE} below --- see~\eqref
{eq:unbias}, to obtain canonical averages.

\section{Illustrations for {free-energy calculations}}\label
{sec:num}

In this section, we illustrate the interest and the limitation of the approach 
in two test cases, using the {\sc Namd} simulation package~\cite
{Phillips2005}. Propagation of motion is performed employing 
Langevin dynamics, in lieu of overdamped Langevin. 
Estimates of the biasing force rely upon trajectory, time 
averages rather than empirical averages over many replicas. Use 
is made of expression~\eqref{eq:f1_simp_1} to determine the local mean force. 
Overall, the method utilized here is similar to 
the biasing techniques~\eqref{eq:GABF_simp}--\eqref{eq:GABF} 
described above.

\subsection{Recovering the free energy}\label{sec:FE}

To recover the free energy associated to some of the reaction coordinates chosen 
to bias the dynamics, one can simply use a grid of the values of the reaction 
coordinates, and classical formulas for the free energy or
its derivative. This relies on the fact that from~\eqref{eq:GABF_simp}--\eqref
{eq:GABF}, if
$((A^1_t)',(A^2_t)')$ reaches an equilibrium $((A^1_\infty)',(A^2_\infty)')$, 
then the law of~$X_t$ at equilibrium is
proportional to $\exp(-\beta (V - A^1_\infty \circ \xi_1 - A^2_\infty \circ
\xi_2))$.

Let us be more precise, considering~\eqref{eq:GABF}. To get the free energy
$A(z_1,z_2)$ {associated} to the {bidimensional} 
reaction coordinate $(\xi_1,\xi_2)$, one can use the formula (compare with~\eqref
{eq:FE} which is~\eqref{eq:A} in the simple case $(\xi_1,\xi_2)=(x_1,x_2)$):
\begin{equation}\label{eq:A}
A (z_1,z_2) = - \beta^{-1} \ln \int \exp(-\beta V(x)) \delta_{(\xi_1,\xi_2)(x)-
(z_1,z_2)}(dx).
\end{equation}
It is {indeed} easy to {verify} that, at 
equilibrium, 
\begin{equation}\label{eq:recover}
\lim_{\varepsilon \to 0} - \beta^{-1} \ln \E ( \delta^\varepsilon
\left( (\xi_1,\xi_2)(X_t) - (z_1,z_2) \right)) +A^1_\infty (z_1) + A^2_\infty
(z_2) = A (z_1,z_2)
\end{equation}
up to an additive constant. Here, $\delta^\varepsilon$ 
denotes an {approximation of identity} 
converging to a Dirac mass at $0$ when $\varepsilon$ goes to $0$. In practice, 
{piecewise} constant approximation {is} obtained 
over a grid of values {accessible to} the reaction coordinates.

Another interesting formula to compute $A(z_1,z_2)$ {derives} from 
the formula:
\begin{equation}\label{eq:grad_A}
\nabla A (z_1,z_2) = \E_\mu \Big( F (X) | (\xi_1,\xi_2)(X) = (z_1,z_2) \Big)
\end{equation}
where the notation $\E_\mu$ means that $X$ is distributed according to $\mu$
and $F$ is a two-dimensional vector defined by: $\forall \alpha \in \{1,2\}$,
\begin{equation}
\label{eq:F}
F_\alpha = \sum_{\gamma=1}^2 G^{-1}_{\alpha,\gamma} \nabla \xi_\gamma
\cdot \nabla V   - \beta^{-1} \div\left(\sum_{\gamma=1}^2 G^{-1}_{\alpha,\gamma} 
\nabla \xi_\gamma\right),
\end{equation}
where $G^{-1}_{\alpha,\gamma}$ denotes the $(\alpha,\gamma)$-component of
the inverse of the matrix with components: $\forall \alpha,\gamma \in \{1, 2\}$,
\begin{equation}\label{eq:G}
G_{\alpha,\gamma}=\nabla \xi_{\alpha} \cdot \nabla \xi_{\gamma}.
\end{equation}
{It} is easy to check that if the time marginal 
{law} of $X_t$ solution to~\eqref{eq:GABF} reaches an equilibrium, then 
\begin{equation}\label{eq:cond_avg}
\E \Big( F (X_t) | (\xi_1,\xi_2)(X_t) = (z_1,z_2) \Big) = \nabla A (z_1,z_2).
\end{equation}
At equilibrium, the law of $X_t$ is, indeed, proportional to $\exp(-\beta (V - 
A^1_\infty \circ \xi_1 - A^2_\infty \circ \xi_2))$, which only differs from $\mu$ 
by a multiplicative function of $(\xi_1,\xi_2)$, which thus cancels out in the 
conditional average~\eqref{eq:cond_avg}.

More generally, to estimate canonical averages, one may {resort 
to} the classical unbiasing procedure:
\begin{equation}\label{eq:unbias}
\int_{\mathcal D} \varphi d\mu = \frac{\displaystyle\int_{\mathcal D} \varphi \, 
\exp(-\beta (A^1_\infty \circ \xi_1 + A^2_\infty \circ \xi_2)) \, \psi_\infty}
{\displaystyle\int_{\mathcal D} \exp(-\beta (A^1_\infty \circ \xi_1 + A^2_\infty
\circ \xi_2)) \, \psi_\infty},
\end{equation}
where $\psi_\infty \varpropto \exp(-\beta (V - A^1_\infty \circ \xi_1 - A^2_
\infty \circ \xi_2)) $ {stands for} the density of the equilibrium 
time marginal {law} of $X_t$, once $A^1_t$ and $A^2_t$ have 
reached a stationary state. 
{It is noteworthy} that in practice, it is always possible to 
{freeze} $A^1_t$ 
and $A^2_t$ at a 
given fixed time  $t_0$ and apply the unbiasing procedure above with the bias 
$A^1_{t_0} \circ \xi_1+A^2_{t_0} \circ \xi_2$ instead of $A^1_{\infty} \circ 
\xi_1+A^2_{\infty} \circ \xi_2$.

\subsection{Conformational equilibrium of the alanine dipeptide}

The first application of the method is a proof of concept making use of the
prototypical terminally blocked amino acid N--acetyl--N'--methylalanylamide
(NANMA), often referred to as alanine ``dipeptide''~\cite{Rossky1979}. The
molecular system consisted of NANMA immersed in a bath of 447 water molecules.
Conformational sampling was performed over a period of 10~ns with the 
{numerical scheme described here}, in which the~$\phi$ and~$\psi$ 
torsional angles of the backbone were
handled as independent order parameters covering each the [$-$180;$+$180] range
of the complete Ramachandran map~\cite{Ramachandran1963}. In the present
implementation of the method, the marginal biases, $A^1_\infty (\phi)$ and
$A^2_\infty(\psi)$, are periodical --- i.e. the average of their derivative is expected to
zero out. As a basis of comparison, a two--dimensional ABF calculation was
conducted over a period of 20~ns. For enhanced performances, the Ramachandran
map was split into four individual quadrants, corresponding to fully
independent simulations. Furthermore, a standard molecular-dynamics simulation
of equal length was performed, from which the~$\phi$ and~$\psi$ dihedral angles
were extracted to measure the exhaustiveness of the conformational sampling.

The simulations were carried out using the {\sc Namd} simulation
package~\cite{Phillips2005} in the isobaric--isothermal ensemble. The pressure
and the temperature were fixed at 1~bar and 300~K, respectively, employing the
Langevin piston algorithm~\cite{Feller1995} and softly damped Langevin
dynamics. The molecular system was replicated in the three directions of
Cartesian space by means of periodic boundary conditions. The particle--mesh
Ewald method~\cite{Darden1993} was employed to compute electrostatic
interactions. The $r$--RESPA multiple time--step
integrator~\cite{Tuckerman1992} was used with a time step of 2~fs and 4~fs for
short-- and long--range forces, respectively. Covalent bonds involving a
hydrogen atom were constrained to their equilibrium length. The short peptide
and its environment were described by the all-atom {\sc Charmm} force
field~\cite{MacKerell1998}.

\begin{figure}[!ht]
\includegraphics[width=\textwidth]{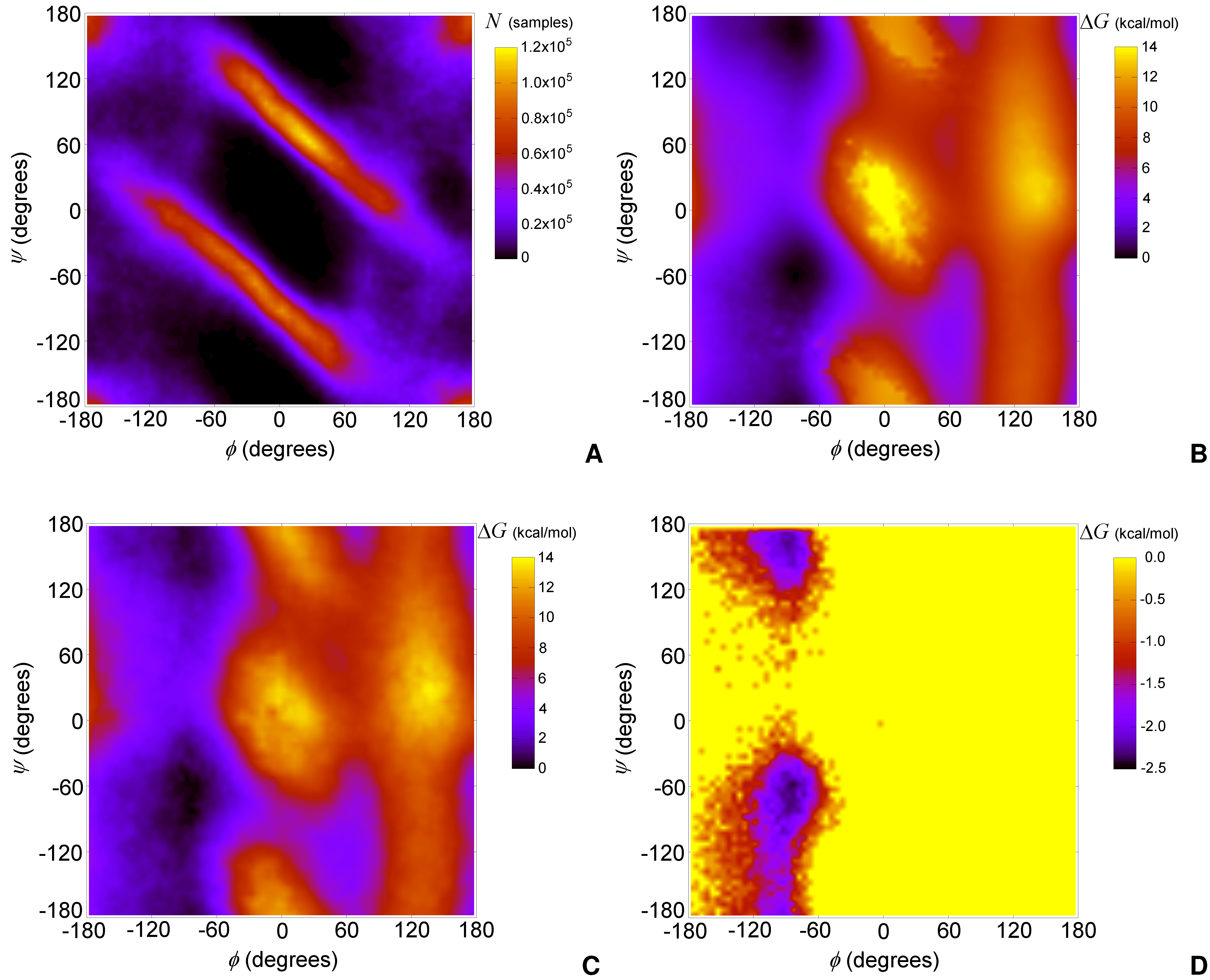}
\hspace*{0.8cm}
\begin{minipage}{13.1cm}
\caption{{\footnotesize
         Conformational equilibrium of the alanine dipeptide (NANMA) in an
         aqueous solution. Distribution of the ($\phi$, $\psi$) dihedral
         angles explored in the course of a 10--ns simulation, wherein the
         torsional angles of the backbone were handled as independent order 
         parameter~(A). Reconstruction of the conformational free-energy map,
         employing expression~(\ref{eq:recover})~(B). Conformational free-energy
         map obtained from a 20--ns, two-dimensional ABF calculation~(C). For
         comparison purposes, a similar map was generated from a 20--ns unbiased
         MD simulation~(D).
         \label{nanma}}}
\end{minipage}
\end{figure}

In the past thirty years, the conformational equilibrium of NANMA has been
investigated at different levels of detail, utilizing a variety of numerical 
schemes and
potential energy functions~\cite{Hagler1978,Rossky1979,Brady1985,Mezei1985,
Ravishanker1986,Anderson1988,Tobias1992,Pellegrini1996,Neria1996,Smart1997,
Chipot1998c,Smith1999,Bolhuis2000,Laio2002,Andricioaei2003,Chekmarev2004,
MacKerell2004,Wang2004,Jang2006,Branduardi2007,Neale2008,Feig2008}. Here, the 
investigation of NANMA is targeted at demonstrating
the ability of the method to recover within reasonable sampling time the
two-dimensional free-energy landscape that characterizes the conformational
equilibrium of the peptide in an aqueous environment. As can be observed in
Figure~\ref{nanma}, within 10~ns, the essential features of the ($\phi$, $\psi$)
conformational space appear to have been explored. Plotting the number of 
samples accrued in the course of the simulation brings to light the anticipated 
coupling between the backbone torsional angles. Concomitant variation of
the $\phi$ and $\psi$ angles is mirrored in the two parallel diagonals,
which appear to be oversampled. After convergence of the marginal 
biases, $A^1_\infty (\phi)$ and $A^2_\infty(\psi)$, the two-dimensional
free-energy landscape was reconstructed employing Equation~\eqref{eq:recover}.
Noteworthily, this map possesses the two expected pronounced minima corresponding
to a right-handed $\alpha$-helical conformation, often referred to as
$\alpha_R$, and to a $\beta$ strand, together
with ancillary local minima of higher free energy, associated to the so-called
$\alpha_D$ and $\alpha_L$ conformational states.

Integration over the basins delineating 
the $\alpha_R$ and the $\beta$ conformations 
yields a free-energy difference of about 0.2~kcal/mol,
in favor of the former state, congruent with 
previous computer simulations~\cite
{Tobias1992,Smart1997,Chipot1998c,MacKerell2004,Wang2004,Feig2008}.
Most importantly, the free-energy landscape inferred from the algorithm proposed 
herein is essentially identical to that obtained from a 20--ns ABF calculation
in ($\phi$, $\psi$) conformational space. A glimpse at Figure~\ref{nanma} is
sufficient to conclude that not only the two maps appear to be
almost interchangeable, but also, using the same bounds of integration over the
$\alpha_R$ and the $\beta$ basins, the free-energy differences agree 
quantitatively. It can, however, be contended that the present toy-example 
may be somewhat exaggeratedly simple to be representative of more challenging
instances, wherein the variables along which the local mean force is computed
are more strongly coupled. As has been discussed above, this scenario would
constitute a limiting case for the validity of the approach developed in this
contribution. Yet, as will be seen hereafter, estimators of the local mean 
force based on simulations handling unidimensional order parameters independently
can still be used profitably to describe accurately rugged multidimensional
free-energy landscapes wherein decoupling of the variables is
not straightforward. Last, to illustrate the role of importance sampling methods, the ($\phi$, 
$\psi$)-map regenerated from a 20--ns unbiased molecular-dynamics 
simulation. It is apparent from Figure~\ref{nanma} that only the lowest
free-energy states, i.e. $\alpha_R$ and $\beta$, have been visited, though
sampling is by and large too parsimonious to allow an acceptably precise
free-energy difference to be determined.

\subsection{Ion transport across a peptide nanotube}

In the second application of the method proposed herein, translocation of an 
halide ion through a chemically-tailored peptide nanotube is considered. Such
engineered synthetic channels arise from the self-assembly through an 
intermolecular hydrogen--bond network~\cite{Bong2001} of cyclic peptides
of alternated 
{\sc d}-- and {\sc l}--chirality~\cite{Ghadiri1993,Hartgerink1996}, in which all 
the side chains
are pointing outwards. Depending upon its amino-acid sequence, the resulting 
anti-parallel $\beta$--sheet--like hollow tubular structure can further
associate in the biological membrane with other channels to form nanopores, 
aggregate at the water--lipid interface prior to partitioning in the bilayer, 
disrupting in general the latter irreversibly, or simply span the membrane as
an independent entity~\cite{Ghadiri1994,Fernandez-Lopez2001}. Here, the peptide 
nanotube utilized consisted of eight
stacked cyclic peptides containing alternated
{\sc d}--leucine and {\sc l}--tryptophan residues and organized into
$cyclo$[\underline{L}W]$_4$ units~\cite{Dehez2007}. At thermodynamic equilibrium, the cyclic peptides are 4.7~\AA apart. The tubular structure was 
immersed in
a fully hydrated, thermalized palmitoyloleylphosphatidylcholine (POPC) 
bilayer formed by 48 lipid units in equilibrium with 1,572 water molecules.
The molecular assembly was replicated in the three directions of Cartesian
space. The initial dimensions of the simulation cell were 36 $\times$ 41 $\times$
79~\AA$^3$.

The free-energy landscape delineating permeation of a single chloride
ion through the synthetic channel was determined along the longitudinal,
$\zeta$, and radial, $\rho$, directions of the latter~\cite{Henin2010}. 
The surrogate, two-dimensional reaction coordinate $(\zeta,\rho)$ was constructed as a subset of 
cylindrical polar coordinates, namely the distance separating the halide
ion from the center of mass of the peptide nanotube projected onto its
longitudinal axis, associated with the distance between the ion from this
axis. Whereas the reaction pathway that connects the cytoplasm to the
periplasm would span approximately 40~\AA, the present investigation 
focuses on a 10--\AA\ segment, starting from the center of mass of the
open-ended tubular structure. This restrained pathway was discretized in
0.1--\AA\ wide bins, in which force samples were accrued. The 
molecular-dynamics simulations were performed in the same conditions
as described above for NANMA.

\begin{figure}[!ht]
\includegraphics[width=\textwidth]{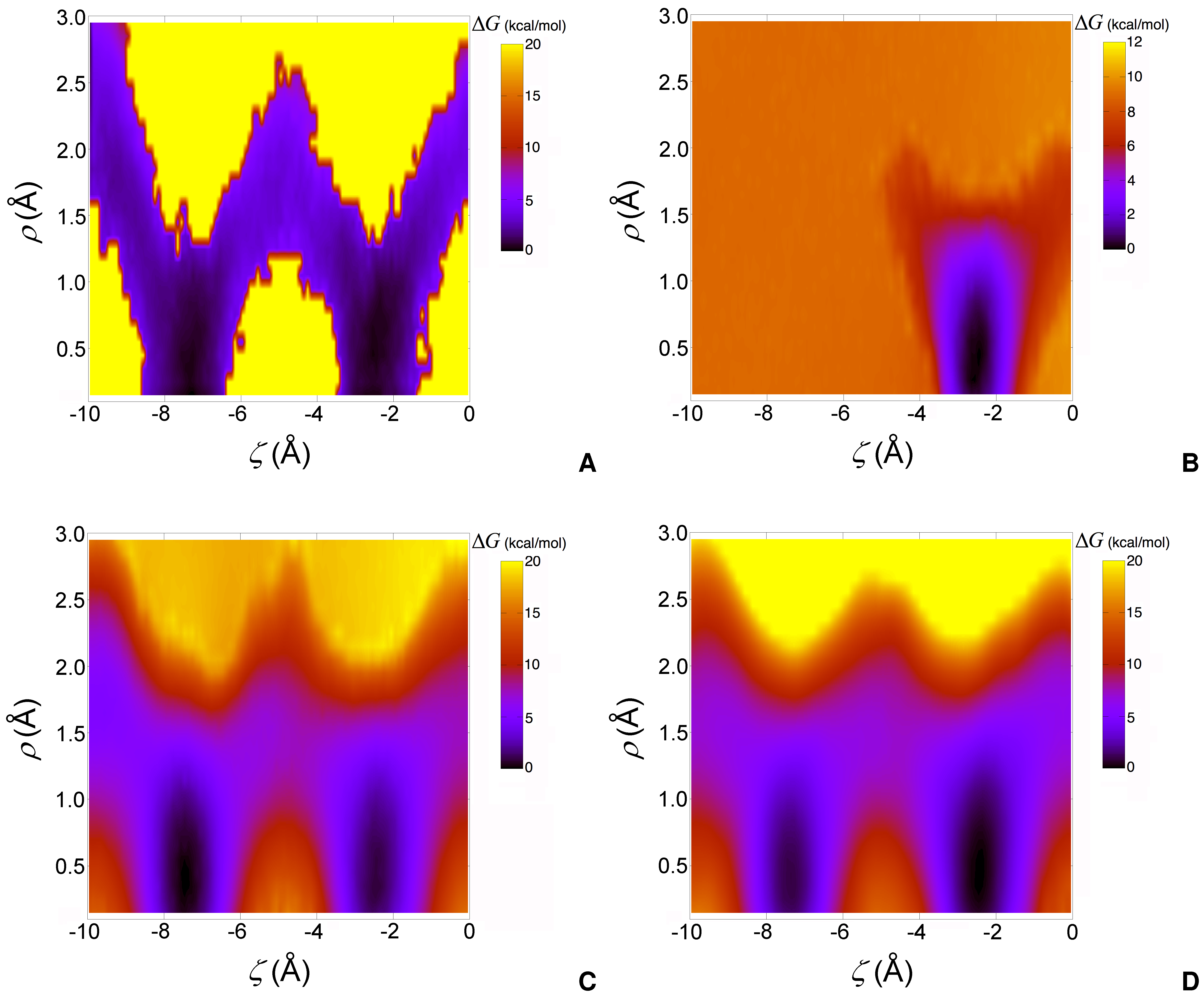}
\hspace*{0.8cm}
\begin{minipage}{13.1cm}
\caption{{\footnotesize
         Permeation of a chloride ion through a peptide 
         nanotube spanning a fully hydrated lipid bilayer. $\zeta$ denotes the 
         longitudinal axis of the synthetic channel and $\rho$, the
         radial direction. Reconstruction 
         employing expression~(\ref{eq:recover})
         of the free-energy landscape of ion permeation
         after 1~ns of sampling in which $\zeta$ and $\rho$ are treated
         as independent variables~(A). Free-energy map
         obtained from a 1--ns two-dimensional ABF calculation~(B).
         10--ns ABF calculation, using as a starting point the 
         two-dimensional gradients recovered from the 1--ns
         simulation in which $\zeta$ and $\rho$ are handled
         independently and
         employing expression~\eqref{eq:grad_A}~(C).
         As a basis of comparison, ($\zeta$, $\rho$) free-energy
         landscape inferred from a converged 
         30--ns two-dimensional ABF calculation~(D). The lowest free-energy regions found at $\zeta = -2.4$~\AA\ and $\zeta =-7.1$~\AA\
correspond to an in-plane chelation of the halide ion, where the
latter is located at the geometric center of the cyclic peptide.
         \label{cyclo1}}}
\end{minipage}
\end{figure}

Chemically--engineered peptide nanotubes possess the ability of conducting
ions~\cite{Sanchez-Quesada2002}. 
Assuming an appropriate amino-acid sequence, individual tubular
structures can insert in the lipid bilayer, where they act as transmembrane
channels~\cite{Kim1998}. 
That such channels can be permeated by a small ions has been
investigated at the theoretical level, employing a variety of 
methods~\cite{Asthagiri2002,Tarek2003,Hwang2006,Hwang2006a}.
It has been suggested recently that whereas diffusion of a sodium ion
through a synthetic channel formed of eight $cyclo$[\underline{L}W]$_4$
units and immersed in a fully hydrated POPC bilayer is essentially 
unhampered~\cite{Dehez2007}, 
the same cannot be said for a chloride ion shuttled across
the cavity of an identical peptide nanotube~\cite{Henin2010}. 
This result can be rationalized
to a large extent by the lesser hydration number of the cation ---
viz. approximately 4--6, compared
to that of the halide ion --- viz. approximately
6--8~\cite{Asthagiri2002}, 
which must undergo considerable dehydration to
enter the open-ended tubular structure. In the midst of the latter, however,
the free-energy landscape characterizing ion permeation is roughly similar
for both species, in--plane coordination being enthalpically favored because
it allows the ion to be better hydrated~\cite{Dehez2007}.

As can be seen in Figure~\ref{cyclo1},
this preference is reflected in the free-energy landscape obtained from the
method proposed herein, handling $\zeta$ and $\rho$ as independent variables.
Within 1~ns of sampling, the entire reaction pathway is explored, 
following what appears to be a minimum--action path. It is noteworthy 
that the chloride ion does not diffuse along a rectilinear path,
collinear to the longitudinal axis of the synthetic channel, e.g.
at $\zeta$ = 0~\AA, but rather avoids the large free-energy barrier
of mid--plane coordination by grazing the wall of the tubular structure
to interact with the amino groups of the $cyclo$[\underline{L}W]$_4$
units. Repeated simulations,
using different initial momenta, yield comparable landscapes, yet
wherein the higher free-energy regions are essentially never visited.
Interestingly enough, the free-energy minima found at about $-$2.4 and 
$-$7.1~\AA\ have virtually the same depth.
It is remarkable that a limited simulation length of 1~ns would provide a
consistent picture of the path followed by the chloride ion over the 
10--\AA\ stretch of the peptide nanotube, when repeated two-dimensional 
ABF calculations of equal length only sample a small fraction of 
the ($\zeta$, $\rho$) configurational space, as shown in Figure~\ref{cyclo1}.

\begin{figure}[!ht]
\includegraphics[width=\textwidth]{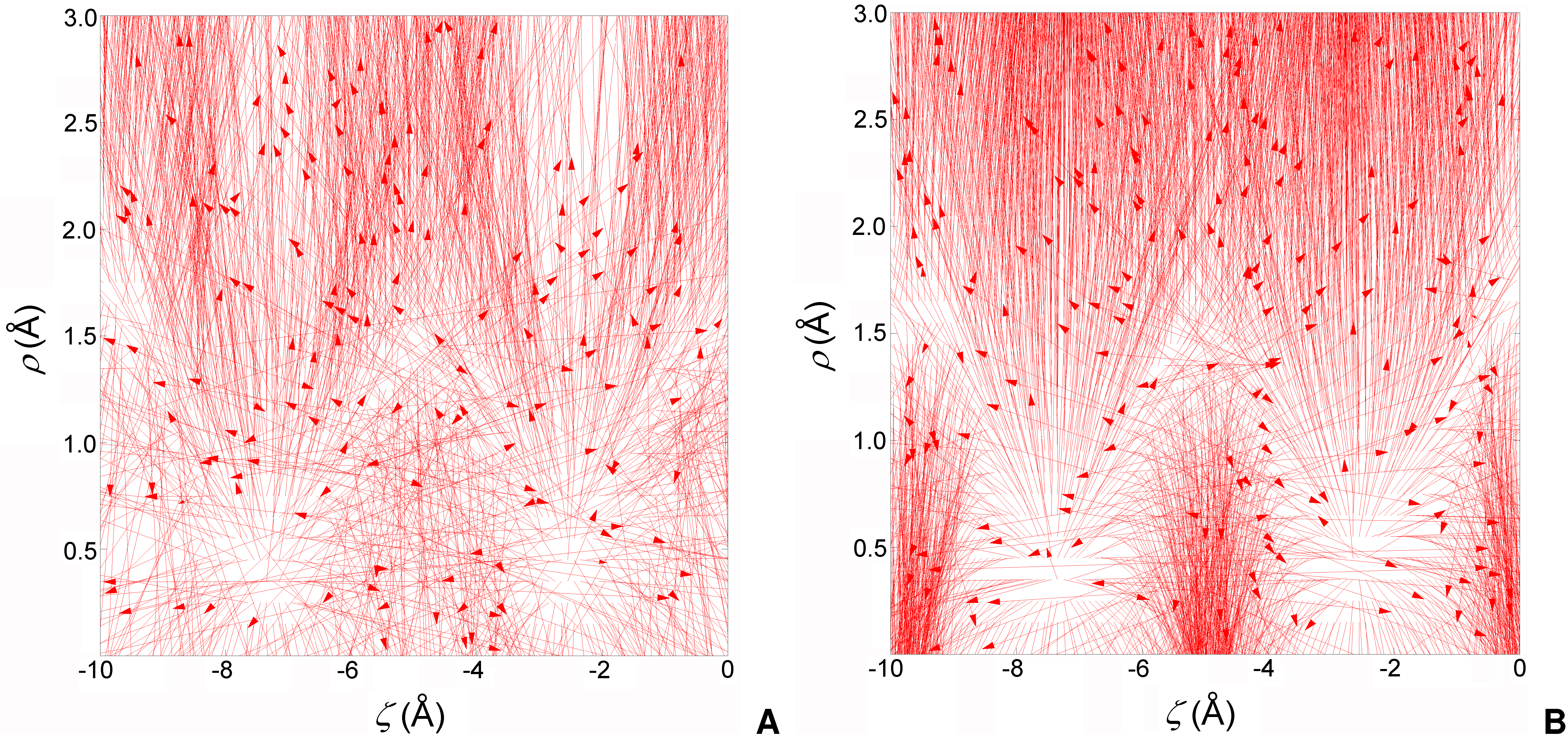}
\hspace*{0.8cm}
\begin{minipage}{13.1cm}
\caption{{\footnotesize
         Permeation of a chloride ion through a peptide 
         nanotube spanning a fully hydrated lipid bilayer. $\zeta$ denotes the 
         longitudinal axis of the synthetic channel and $\rho$, the
         radial direction. Gradient of the free energy inferred from
         a 1--ns simulation wherein 
         $\zeta$ and $\rho$ are handled independently, employing
         expression~(\ref{eq:grad_A})~(A). For comparison purposes,
         two-dimensional gradient of the free energy obtained from
         a 30--ns ABF calculation in ($\zeta$, $\rho$) configurational
         space~(B).
         \label{cyclo2}}}
\end{minipage}         
\end{figure}

Unfortunately, while the ABF calculation progressively explores the free-energy
landscape of ion permeation, reaching convergence within 30~ns, a simulation of 
equal length based on the numerical scheme introduced here only marginally 
improves the picture drawn from the aforementioned short 1--ns run.  
This glaring shortcoming of the method can be ascribed in large measure to 
antagonist effects of the biases in the $\zeta$ and in the $\rho$ directions,
deteriorating mutually any progress made by the two variables.
It does not mean, however, that the valuable, yet incomplete description of 
the reaction pathway cannot be utilized profitably to recover the correct
free-energy landscape, possibly faster than a classical ABF calculation would. 

As has been discussed previously, Equation~\eqref{eq:grad_A} allows the
gradient of the free energy to be estimated on the basis of independent
measures of the force acting along the two order parameters, $\zeta$ and $\rho$.
In Figure~\ref{cyclo2}, the reference two-dimensional gradient of the free
energy inferred from a 30--ns ABF calculation is compared to an approximation
thereof obtained after 1~ns of sampling. Although the two vector fields show
significant discrepancies, they also retain common characteristic features,
in particular in the region of lower free energy, visited appropriately by
the algorithm described herein. In turn, the approximate gradient can be
employed as a starting point for a separate ABF calculation in ($\zeta$, $\rho$) 
configurational space, with the hope that the initial guess of a minimum--action
pathway might boost exploration of the complete free-energy landscape.
As highlighted in Figure~\ref{cyclo1}, this appears to be, indeed, true ---
after 10~ns, the map inferred from the separate two-dimensional ABF
run possesses a topology essentially identical to that of the reference,
30--ns simulation. However not interchangeable, the two free-energy 
landscapes agree quantitatively in the low free-energy regions ---
i.e 0 $\leq \rho \leq$ 2~\AA, albeit only qualitatively so in the
higher free-energy regions corresponding to the wall of the open-ended
tubular structure.

\bigskip

{\bf Acknowledgments.}\\
The authors gratefully acknowledge Jérôme Hénin for stimulating
discussions. They are indebted to the Grand Équipement National de Calcul
Intensif and the Centre Informatique National de l'Enseignement
Supérieur for provision of computer time.
This work is funded by the Agence Nationale de la
Recherche (MEGAS project, grant ANR-09-BLAN-0216-01) and the Institut National de la Recherche en Informatique et en Automatique (ARC Hybrid project). 


\end{document}